%
%


\documentclass{artjlt}

\usepackage{amsmath,amsfonts,amssymb}
\usepackage{amscd,graphics,lineno}
\usepackage{mathrsfs} 
\usepackage{multirow}

\def\norm#1{\left\Vert#1\right\Vert}

\def\R{{\mathbb R}}

\def\e{\varepsilon}
\def\Homeo{{\mbox{\rm Homeo}\,}}

\def\sB{\mathcal B}
\def\sF{\mathcal F}

\def\sT{\mathcal T}
\def\sbs{\subset}
\def\ti{\times}
\def\obr{^{-1}}

\def\Diff{{\mbox{\rm Diff}\,}}
\def\Mor{{\mbox{\rm Mor}\,}}

\def\TopGr{{\bf TG}}

\def\LCG{{\bf LCLG}}

\def\Rar{\Rightarrow}

\def\om{\omega}
\def\Om{\Omega}
\def\si{\sigma}

\def\bK{{\bf K}}
\def\frf{\mathfrak f}
\def\frg{\mathfrak g}

\def\frs{\mathfrak s}
\def\frsl{\mathfrak{sl}}

\setcounter{MaxMatrixCols}{12}

\title{On epimorphisms in some categories of infinite-dimensional Lie groups}                                     
\author{Vladimir G. Pestov and Vladimir V. Uspenskij}                 
\lastname{Pestov and Uspenskij}  

\msc{18A20, 22E65, 58D05}    

\keywords{Epimorphism, locally convex Lie group, Fr\'echet-Lie group, Banach-Lie group, Hausdorff topological group}         

\address{%
Vladimir G. Pestov\\               
Departamento de Matem\'atica\\
 Universidade Federal de Santa Catarina\\
 Trindade\\
 Florian\'opolis, SC, 
 88.040-900\\
 Brazil\\  
{\em and} \\
Dept of Mathematics and Statistics\\ 
University of Ottawa\\
 STEM Complex\\
 150 Louis-Pasteur Pvt.\\
 Ottawa, Ontario
 K1N 6N5\\
 Canada   \\      
vladimir.pestov@uottawa.ca               
}
\address{Vladimir V. Uspenskij \\
Department of Mathematics\\
 321 Morton Hall\\ 
Ohio University\\ 
Athens, Ohio 45701 \\
 USA \\
uspenski@ohio.edu
}

%
%


\begin{document}


\maketitle

\begin{abstract}
Let $X$ be a smooth compact connected manifold.
Let $G=\Diff X$ be the group of diffeomorphisms of $X$,
equipped with the $C^\infty$-topology, 
and let $H$ be the stabilizer of some point in $X$.
Then the inclusion $H\to G$, which is a morphism of two regular Fr\'echet--Lie groups, is an epimorphism in the category of smooth Lie groups modelled on complete locally convex spaces. At the same time, in the latter category, epimorphisms between finite dimensional Lie groups have dense range.
We also prove that if $G$ is a Banach--Lie group and $H$ is a proper closed subgroup, 
the inclusion $H\to G$ is not an epimorphism in the category of Hausdorff topological groups.   
\end{abstract}

\section{Introduction}
A morphism $f:X\to Y$ in a category \bK\ is an
{\it epimorphism\/} if for any object $Z$ the mapping $f^*:\Mor(Y,Z)\to
\Mor(X,Z)$, defined by $f^*(g)=gf$, is injective.

It is known that epimorphisms in the category of Hausdorff topological
groups need not have dense range \cite{U1,U2,U3}. This 
result had in its time answered a long-standing open problem, see \cite{Nu} for a survey of the previous state of knowledge. It was already known by that time that in the category of locally compact groups epimorphisms between finite dimensional Lie groups need not have dense range (Kallman, unpublished, see \cite{Nu}, Remark 2.10), however, epimorphisms between locally compact groups in the wider category of all topological groups must have a dense range (\cite{Nu}, Thm. 2.9).

Those results prompt us to investigate intermediate categories of --- possibly infinite-dimensional --- Lie groups, more precisely, group objects in the category of $C^\infty$ manifolds modelled on complete locally convex spaces (where we follow the approach by Neeb \cite{Ne}). 
We call such groups {\em locally convex Lie groups} 
and denote the category of such groups by \LCG.
We show that within this, very wide, category there exist epimorphisms even between regular 
Fr\'echet--Lie groups without dense range. 
A {\em Fr\'echet--Lie group}, or an {\em FL-group}, 
is a locally convex group modelled on a Fr\'echet space
(= a complete metrizable locally convex space).
At the same time, we strengthen Nummela's result above by observing that already in the category of regular Fr\'echet--Lie groups (viewed generally as the best-behaved class after that of Banach--Lie groups, see \cite{O}) an epimorphism between connected finite dimensional Lie groups must have a dense range. 

If \bK\ is the category
of sets, or of groups, or of abelian groups, then epimorphisms in \bK\
are precisely surjective morphisms. If \bK\ is the category of abelian
Hausdorff topological groups or of Hausdorff topological vector spaces,
then a morphism $f:X\to Y$ is an epimorphism if and only if $f(X)$ is dense
in $Y$. K.H.~Hofmann asked in the 1960s whether the same is true for the
category
\TopGr\
of all Hausdorff topological groups (not necessarily abelian).
The answer is negative. For example, let $X$ be a compact connected
topological
manifold, let $G=\Homeo X$ be the group of all self-homeo\-mor\-phisms of
$X$, equipped with the compact-open topology, and let $H=\{f\in G:
f(p)=p\}$ be the stability subgroup at some point $p\in X$. Then
the inclusion $H\to G$ is an epimorphism in \TopGr\ \cite{U2}.

Now let $X$ be a compact smooth manifold, and let $G= \Diff X$ be the group
of all diffeomorphisms of $X$. Then $G$ has a natural structure of
a regular Fr\'echet--Lie group (where regularity implies in particular the existence of an exponential map, although we will never use it), see \cite{M,H}; \cite{O}, Ch. VI, \S 2; \cite{Ne}, Ex. II.3.14. The corresponding topology on $G$, the $C^\infty$-topology, is the topology of uniform convergence of all derivatives. Let $p\in X$, and let $H=\{f\in G: f(p)=p\}$ be the stability subgroup at $p$. The group
$H$ also has a natural regular Fr\'echet--Lie group structure, see \cite{O}, p. 145, Th. 4.5. In view
of the result cited above, it is natural to ask if the inclusion $H\to G$
is an epimorphism in the category of the regular Fr\'echet--Lie groups, or indeed in the wider category of all locally convex Lie groups. Our main result answers this question in the positive.

\begin{theorem}
\label{t:1}
Let $X$ be a compact connected
smooth manifold, let $G=\Diff X$,
and let $H\sbs G$ be the stability subgroup at some point $p\in X$. Then
the inclusion $H\to G$ is an epimorphism in the category \LCG\ 
of Lie groups modelled on complete locally convex spaces.
\end{theorem}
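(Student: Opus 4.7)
Suppose $f_1,f_2\in\Mor(G,L)$ are two $\LCG$-morphisms with $f_1|_H=f_2|_H$; I aim to show $f_1=f_2$. Define $\alpha\colon G\to L$ by $\alpha(g)=f_2(g)f_1(g)\obr$; from the hypothesis, a direct computation yields $\alpha(gh)=\alpha(g)$ for all $g\in G$ and $h\in H$, so $\alpha$ is constant along the fibers of the evaluation map $\mathrm{ev}_p\colon G\to X$, $g\mapsto g(p)$. Since $\mathrm{ev}_p$ is the projection of a smooth principal $H$-bundle, $\alpha$ descends to a smooth map $\tilde\alpha\colon X\to L$ with $\tilde\alpha(p)=e_L$, and one checks the twisted equivariance
\[
\tilde\alpha(g\cdot x)=f_2(g)\,\tilde\alpha(x)\,f_1(g)\obr\qquad (g\in G,\ x\in X),
\]
which for $h\in H$ reads $\tilde\alpha(h\cdot x)=f_1(h)\tilde\alpha(x)f_1(h)\obr$. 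The theorem thus reduces to the assertion $\tilde\alpha\equiv e_L$.

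To exploit this, I pass to Lie algebras. Write $\frg=\mathrm{Vect}(X)$, $\frh=\{v\in\frg:v(p)=0\}$, and let $\mathfrak l$ denote the Lie algebra of $L$. The continuous linear map $\delta:=df_2-df_1\colon\frg\to\mathfrak l$ vanishes on $\frh$ and therefore factors through the \emph{finite-dimensional} quotient $\frg/\frh\cong T_p X$ as $\bar\delta\colon T_p X\to\mathfrak l$, coinciding with $d\tilde\alpha_p$. Differentiating the equivariance at $x=p$ yields, for $h\in\frh$ and $v\in\frg$,
\[
\bar\delta\bigl([h,v]\bmod\frh\bigr)=[df_1(h),\,\bar\delta(v\bmod\frh)].
\]
The induced action of $\frh$ on $T_p X$ factors through the natural Lie algebra map $\frh\to\mathfrak{gl}(T_p X)$, which is \emph{surjective} with infinite-dimensional kernel $\frh_2$ consisting of vector fields vanishing to order at least two at $p$. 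Applying the equivariance relation to $h\in\frh_2$ yields $[df_1(\frh_2),\mathrm{Im}(\bar\delta)]=0$, i.e.\ $\mathrm{Im}(\bar\delta)\sbs Z_{\mathfrak l}(df_1(\frh_2))$.

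The crux of the proof --- and its main obstacle --- is to convert this centralizer condition, together with the $\mathfrak{gl}(T_p X)$-equivariance carried by $\bar\delta$, into the vanishing $\bar\delta=0$. My plan is to iterate the argument with the subalgebras $\frh_k$ of vector fields vanishing to order at least $k$ at $p$, and in particular with $\frh_\infty=\bigcap_k\frh_k$, which in the smooth ($C^\infty$) category is huge and contains every vector field compactly supported off an arbitrary neighborhood of $p$. Combining the resulting centralizer constraint $\mathrm{Im}(\bar\delta)\sbs Z_{\mathfrak l}(df_1(\frh_\infty))$ with the fragmentation property of $\Diff X$ --- every diffeomorphism sufficiently close to the identity factors as a product of diffeomorphisms of small support, and such small-support elements can typically be conjugated into $H$ by other elements of $G$ --- I expect to force $\bar\delta=0$. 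Making this rigorous is the main technical step, and it is precisely where the non-Banach, infinite-dimensional character of $\Diff X$ is used. Once $\bar\delta=0$, standard integration arguments for regular Fr\'echet--Lie groups imply $f_1=f_2$ on $\Diff_0 X$; the passage to the possibly disconnected $\Diff X$ is routine, since any $g\in\Diff X$ decomposes as $g=\varphi h$ with $\varphi\in\Diff_0 X$ and $h\in H$ by transitivity of $\Diff_0 X$ on the connected manifold $X$.
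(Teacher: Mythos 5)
Your reduction is sound and runs parallel to the paper's: setting $\alpha(g)=f_2(g)f_1(g)\obr$ and descending to a smooth map $\tilde\alpha\colon X\to L$ with the twisted equivariance $\tilde\alpha(g\cdot x)=f_2(g)\tilde\alpha(x)f_1(g)\obr$ is exactly the content of the paper's criterion (Theorem~\ref{t:2.2} together with the identification $G/H\cong X$), which reduces Theorem~\ref{t:1} to the statement that every $G$-equivariant smooth map from $X$ to a locally convex Lie $G$-group is constant. The computation that $\delta=df_2-df_1$ kills $\frh$ and factors through $T_pX$, and the derived intertwining relations, are also correct. But the proof stops precisely at its crux: you never prove $\bar\delta=0$, and you say so yourself. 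That is not a fixable detail but the entire theorem, and the plan you sketch for it is unlikely to close the gap. First, the fragmentation/conjugation idea founders on the standard obstruction: knowing that $\Diff_0X$ is generated by conjugates $kHk\obr$ does not let you compare $f_1$ and $f_2$ there, because $f_1(khk\obr)=f_1(k)f_1(h)f_1(k)\obr$ and $f_2(khk\obr)=f_2(k)f_2(h)f_2(k)\obr$ involve the a priori unequal values $f_i(k)$ --- this is exactly why epimorphisms of abstract groups are surjective and why normal generation proves nothing here. Second, the algebraic constraints you extract are genuinely satisfiable: e.g.\ taking $h\in\frh$ equal to the Euler field near $p$ gives only that $\mathrm{ad}(df_1(h))$ acts as a scalar on the image of $\bar\delta$, and an element of a locally convex (even Banach) Lie algebra may perfectly well be such an eigenvector. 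Indeed the referee's Theorem~\ref{t:utsl2} shows that centralizer/eigenvalue arguments of this kind can at best yield epimorphy in special situations where a boundedness hypothesis on $\mathrm{ad}$ is available; here the target is a general locally convex Lie group and no such hypothesis holds.

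What is missing is an analytic input that exploits the infinite-dimensionality of $\Diff X$ itself, and this is where the paper's proof diverges entirely from yours. The paper dualizes: from the equivariant map $j\colon X\to F$ it builds a continuous $G$-equivariant linear map $j^*\colon\frf^*\to\Om^1(X)$ (Proposition~\ref{p:1}), shows that $\frf^*$ with the compact-open topology is covered by $G$-invariant $\sigma$-compact subspaces (Lemma~\ref{l:4}, Proposition~\ref{p:factor}, Corollary~\ref{c:1}, via Banach--Alaoglu and factoring through metrizable $G$-modules), and then shows by a Baire category argument that $\Om^1(X)$ contains no non-zero $G$-invariant $\sigma$-compact subspace (Lemma~\ref{l:2}, Proposition~\ref{p:2}) --- the point being that the orbit of a non-zero $1$-form under a $C^k$-neighborhood of the identity is never precompact, because the $(k+1)$-st derivatives of diffeomorphisms in that neighborhood are unbounded. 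Nothing in your outline plays the role of this non-compactness argument, so the proposal as it stands does not constitute a proof.
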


Since $H$ is a proper closed subgroup of $G$, it follows that epimorphisms
in the category of locally convex Lie groups need not have dense range, even between regular Fr\'echet--Lie groups. 

At the same time, the inclusion $H\to G$ of Theorem~\ref{t:1}
is not an epimorphism in the category \TopGr\ (Section~\ref{s:2}, Remark~2). In other words, any two Lie group morphisms $f,g: G\to K$ to a locally convex Lie group $K$ that agree on $H$ must be equal, but there exist distinct \TopGr-morphisms $f,g:G\to K$ to a Hausdorff
topological group $K$ that agree on $H$.

In Section 2 we prove a criterion for a morphism to be an
epimorphism in \TopGr\ (Theorem~\ref{t:2.1}) and observe that Theorem~\ref{t:1}
follows from the following assertion:

\smallskip

{\em $(*)$ Use the notation of Theorem~\ref{t:1}. Every $G$-equivariant smooth map
from $X$ to a locally convex Lie $G$-group is constant}.

\smallskip 
The notion of a locally convex $G$-group is defined in the beginning of Section~2. It is a locally convex group equipped with a smooth action of $G$.
We prove the assertion $(*)$ in Section~3.

We also note that if $f\colon G\to H$ is a morphism between finite dimensional Lie groups which is an epimorphism in the category of locally convex groups (or already in the smaller category of regular Fr\'echet--Lie groups, if $G$ is connected), then $f$ must have dense range in $H$. Another new result states that any morphism $f\colon G\to H$ between Banach--Lie groups that is an epimorphism in the category \TopGr\ of Hausdorff topological groups must have dense range. Even more, if $H$ is a proper closed subgroup (not necessarily a Banach--Lie subgroup) of a Banach--Lie group $G$, then the inclusion $H\to G$ is not an epimorphism of Hausdorff topological groups. 

Finally, the anonymous referee has kindly provided an example of an embedding of connected finite dimensinal Lie groups (the subgroup of upper triangular matrices in $SL_2(\R)$) that is an epimorphism of Banach--Lie groups.


A number of unanswered questions are collected at the end of the article.

\section 
{Criterion for an inclusion to be an epimorphism}
\label{s:2}
Let $G$ and $F$ be two topological groups. Let us say that $F$ is a
{\it $G$-group\/} if $G$ acts continuously on $F$ by automorphisms, that
is, a continuous mapping $\sigma: G\times F\to F$ is given
such that for every $g\in G$ the
mapping $\si_g:F\to F$ defined by $\si_g(x)=\si(g,x)$ is an automorphism
of $F$ and the mapping $g\mapsto \si_g$ is a group homomorphism. In this
situation, the semidirect product $G\ltimes_\si F$ can be defined, which is again
a topological group. We shall usually write simply $gx$ instead of
$\si(g,x)$.

Similarly, if $G$ and $F$ are locally convex Lie groups (in the sense of \cite{Ne}), we say that $F$ is a locally convex $G$-group if, as above, an action $\si:G\times F\to F$ is given which is in addition a $C^\infty$-mapping between manifolds. In this case, the semidirect product $G\ltimes_\si F$ has a natural locally convex group Lie group structure. If both $G$ and $F$ are Fr\'echet--Lie groups, so is the semidirect product.

For a subset $A$ of a group $G$ we denote by $C_G(A)$ the centralizer of
$A$ in $G$, that is,
the subgroup $\{x\in G: xa=ax \text{ for every } a\in A\}$.

\begin{theorem}
\label{t:2.1}
Let $i:H\to G$ be a morphism in the category \TopGr\ of
Hausdorff topological groups.
The following are equivalent:
\begin{enumerate}
\item $i$ is an epimorphism in \TopGr;
\item $C_K(fi(H))=C_K(f(G))$ for any morphism $f:G\to K$ in \TopGr;
\item if $F$ is a topological $G$-group and $x\in F$ is $i(H)$-fixed
(that is, $hx=x$ for every $h\in i(H)$), then $x$ is $G$-fixed.
\end{enumerate}
If $H$ is a closed subgroup of $G$ and $i:H\to G$ is the embedding,
these three conditions are also equivalent to the fourth:\newline
(4) for any topological $G$-group $F$ any $G$-equivariant
continuous mapping $f:G/H\to F$ is constant.
\end{theorem}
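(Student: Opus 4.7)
My plan is to establish the cycle $(1)\Rar(3)\Rar(2)\Rar(1)$, and then to derive the equivalence of $(3)$ with $(4)$ under the extra assumption that $i$ is the embedding of a closed subgroup. For $(1)\Rar(3)$, assume that $F$ is a topological $G$-group and that $x\in F$ is $i(H)$-fixed. I would consider the semidirect product $M=G\ltimes F$, which is a Hausdorff topological group, together with the two continuous homomorphisms $j,j'\colon G\to M$ given by $j(y)=(y,e)$ and $j'(y)=(e,x)j(y)(e,x)\obr$ (the conjugate of $j$ by the element $(e,x)\in M$). A short computation inside $M$ shows that $j(y)=j'(y)$ is equivalent to $yx=x$, so the hypothesis on $x$ gives $j|_{i(H)}=j'|_{i(H)}$, whence by $(1)$ the equality $j=j'$ on all of $G$, i.e.\ $x$ is $G$-fixed. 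The direction $(3)\Rar(2)$ is shorter: given a morphism $f\colon G\to K$ and an element $k\in C_K(fi(H))$, I would equip $K$ with the continuous $G$-action by automorphisms $y\cdot k'=f(y)k'f(y)\obr$; the hypothesis on $k$ then says exactly that $k$ is $i(H)$-fixed, so by $(3)$ it is $G$-fixed, i.e.\ $k\in C_K(f(G))$.

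The implication $(2)\Rar(1)$ is what I expect to be the main technical step. Given two morphisms $f,g\colon G\to K$ agreeing on $i(H)$, I would introduce the auxiliary topological group $K'=(K\ti K)\rtimes\Z/2$, where the generator $\tau$ of $\Z/2$ acts by swapping the two coordinates, together with the continuous homomorphism $\phi\colon G\to K'$, $\phi(y)=((f(y),g(y)),1)$. A brief calculation in the semidirect product shows that the involution $t=((e,e),\tau)$ commutes with $\phi(y)$ precisely when $f(y)=g(y)$. Hence $t\in C_{K'}(\phi i(H))$ by hypothesis on $f,g$, so $(2)$ gives $t\in C_{K'}(\phi(G))$, which is exactly the desired equality $f=g$.

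Finally, to obtain $(3)\Leftrightarrow(4)$ when $H$ is closed and $i$ is the inclusion: from $(3)\Rar(4)$, any continuous $G$-equivariant $f\colon G/H\to F$ has $x:=f(H)\in F$ fixed by $H$, hence by $(3)$ fixed by $G$, and $G$-equivariance then forces $f$ to be constant equal to $x$. Conversely, an $H$-fixed point $x\in F$ defines a continuous $G$-equivariant map $G/H\to F$, $gH\mapsto gx$ (well-defined since $x$ is $H$-fixed, and continuous by the universal property of the quotient topology on $G/H$), which by $(4)$ must be constant, so $x$ is $G$-fixed. The main obstacle throughout is the auxiliary construction in $(2)\Rar(1)$: one must encode the failure of two morphisms to agree as the failure of a single element to lie in a centralizer, which the wreath-product-style group $K'$ accomplishes economically; the remaining implications are essentially a matter of setting up the right semidirect product and computing with conjugation.
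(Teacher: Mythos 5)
Your proof is correct and uses essentially the same ingredients as the paper: the coordinate-swapping semidirect product $(K\times K)\rtimes\Z/2$ to link epimorphy with centralizers, the semidirect product $G\ltimes F$ to link centralizers (or epimorphy directly) with fixed points, and the standard correspondence between $H$-fixed points and equivariant maps $G/H\to F$. The only difference is organizational --- you run the cycle $(1)\Rightarrow(3)\Rightarrow(2)\Rightarrow(1)$, folding the paper's conjugation argument for ``not $(2)\Rightarrow$ not $(1)$'' into your direct proof of $(1)\Rightarrow(3)$ --- which changes nothing of substance.
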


\begin{proof}
Clearly a morphism $i:H\to G$ is an epimorphism in \TopGr\ if and only if
the inclusion $i(H)\to G$ is. Hence without loss of generality we may
assume that $H$ is a subgroup of $G$ and $i:H\to G$ is the inclusion.

We show that not (1) $\Rar$ not (2).
By definition,
the inclusion $H\to G$ is not an epimorphism if and only if there exist
a Hausdorff topological group $L$ and two distinct morphisms
$g,h:G\to L$
which agree on $H$. Let $C=\{1,a\}$ be
a cyclic group of order 2. The group $C$ acts on $L\times L$ by permutations
of coordinates, let $K=C\ltimes (L\times L)$ be the corresponding semidirect product.
Let $j:L\times L\to K$ be the natural embedding, and let 
$b\in K$ be the image of $a\in C$ under the natural embedding $C\to K$.
If $x,y\in L$, then $j(x,y)$ commutes with $b$ if and only if $x=y$.
Define a morphism $f:G\to K$ by $f(x)=j(g(x), h(x))$. Then $b$ commutes with
$f(H)$ but not with $f(G)$. Thus $C_K(f(H))\ne C_K(f(G))$.

We show that not (2) $\Rar$ not (1). Suppose $f:G\to K$ is a morphism
such that some $b\in K$ commutes with $f(H)$ but not with $f(G)$. Let
$\si$ be the inner automorphism of $K$ corresponding to $b$. Then $f$ and
$\si f$ are distinct morphisms of $G$ into $K$ which agree on $H$. Thus
the inclusion of $H$ into $G$ is not an epimorphism.

We show that (2) $\Rar$ (3). Let $F$ be a $G$-group, and let $x\in F$ be
an $H$-fixed element. Let $K$ be the semidirect product of $G$ and $F$,
and let $i:G\to K$ and $j:F\to K$ be the canonical embeddings.
If the condition (2) holds, then $C_K(i(H))=C_K(i(G))$.
Since $x$ is $H$-fixed, we have $j(x)\in C_K(i(H))$.
It follows that $x\in C_K(i(G))$. Hence $x$ is $G$-fixed.

We show that (3) $\Rar$ (2). Let $f:G\to K$ be a continuous homomorphism,
and let $x\in C_K(f(H))$. The group $K$ can be considered as a $G$-group,
equipped with the $G$-action defined by $(g,k)\mapsto f(g)kf(g)^{-1}$. The
element $x$ is $H$-fixed. If the condition (3) holds, then $x$ is also
$G$-fixed, which means that $x\in C_K(f(G))$.

Finally, the equivalence (3) $\Leftrightarrow$ (4) is clear, since for
any $G$-group $F$ there is a natural one-to-one correspondence between
$H$-fixed elements of $F$ and $G$-equivariant morphisms $f:G/H\to F$, which
assigns to every $H$-fixed element $x\in F$ the morphism
$gH\mapsto gx$ of $G/H$ to $F$.
\end{proof}

\begin{theorem}
\label{t:2.2}
Let $i:H\to G$ be a morphism in the category \LCG\ of
locally convex Lie groups.
The following are equivalent:
\begin{enumerate}
\item $i$ is an epimorphism in \LCG;
\item $C_K(fi(H))=C_K(f(G))$ for any morphism $f:G\to K$ in \LCG;
\item if $F$ is a locally convex Lie $G$-group and $x\in F$ is $i(H)$-fixed
(that is, $hx=x$ for every $h\in i(H)$), then $x$ is $G$-fixed.
\end{enumerate}
\end{theorem}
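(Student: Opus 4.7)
The plan is to transcribe the proof of Theorem~\ref{t:2.1} into the \LCG\ setting, checking at each step that the auxiliary groups, morphisms and actions one constructs are not merely topological and continuous but genuinely locally convex Lie groups, smooth Lie group morphisms, and smooth actions. Three structural observations underpin the whole translation: (a) a finite group is a $0$-dimensional Lie group, so any action of a finite group by diffeomorphisms on a locally convex Lie group is automatically smooth; (b) the semidirect product of two locally convex Lie groups by a smooth action is again in \LCG, as recalled at the beginning of Section~\ref{s:2}; (c) in every locally convex Lie group, inner automorphisms and conjugation are smooth, since multiplication and inversion are. As in Theorem~\ref{t:2.1}, I would begin by reducing to the case in which $i$ is an inclusion, using that $i$ is an \LCG-epimorphism iff $i(H)\to G$ is, and that all three conditions depend only on the image $i(H)$.

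With these preparations, each of the four implications goes through virtually verbatim. For not~(1) $\Rar$ not~(2), given two distinct \LCG-morphisms $g,h:G\to L$ agreeing on $H$, I would form $K=C\ltimes(L\times L)$ with $C=\{1,a\}$ acting by coordinate swap; by (a) and (b) this is in \LCG, the map $f(x)=j(g(x),h(x))$ is a smooth morphism, and the element $b\in K$ centralises $f(H)$ but not $f(G)$ exactly as before. For not~(2) $\Rar$ not~(1), observation (c) guarantees that the inner automorphism $\si_b$ is a smooth automorphism of $K$, so $f$ and $\si_b\circ f$ are distinct \LCG-morphisms agreeing on $H$. The implication (2) $\Rar$ (3) is obtained by setting $K=G\ltimes F$, which lies in \LCG\ by (b), and applying (2) to the smooth embedding $i:G\to K$ together with the smooth embedding $j:F\to K$. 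For (3) $\Rar$ (2), the conjugation action $(g,k)\mapsto f(g)kf(g)^{-1}$ is smooth by (c) combined with smoothness of $f$, so $K$ becomes a locally convex Lie $G$-group and one concludes as in Theorem~\ref{t:2.1}.

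The work is therefore bookkeeping rather than new mathematics, and I do not foresee a genuine obstacle: every auxiliary construction produced in the proof of Theorem~\ref{t:2.1} is built from semidirect products, inner automorphisms and conjugation actions, all of which stay inside \LCG\ by (a)--(c). The only point that does not transfer is condition~(4) of Theorem~\ref{t:2.1}, because in the \LCG\ setting the coset space $G/H$ need not be a smooth manifold and the translation action of $G$ on it need not be smooth; for this reason (4) is omitted from the statement of Theorem~\ref{t:2.2}, and its absence causes no difficulty since (4) is not used in the applications to the category \LCG\ that follow.
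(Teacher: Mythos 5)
Your proposal is correct and matches the paper's intent exactly: the paper omits the proof of Theorem~\ref{t:2.2}, stating only that it is ``quite similar to the proof of Theorem~2.1'' and that ``the essential point is that semidirect products exist in \LCG,'' which is precisely the translation you carry out, including the correct reason for dropping condition~(4). Your observations (a)--(c) are the right bookkeeping checks, and no genuinely new argument is needed.
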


The proof is quite similar to the proof of Theorem~2.1 and hence omitted.
The essential point is that semidirect products exist in \LCG. We did not
include the analogue of condition (4) of Theorem~\ref{t:2.1}
in order to avoid possible ambiguities 
of the notion ``locally convex Lie subgroup of a locally convex Lie group".
However, if $X$ is a compact smooth manifold, $G=\Diff X$ and $H$ is the stabilizer
of a point $p\in X$, as in Theorem~\ref{t:1}, the quotient $G/H$ is well-defined
in the category of manifolds modelled on (complete) locally convex spaces and can be identified with $X$. This means
that for every manifold $M$ modelled on a (complete) locally convex space there is a one-to-one correspondence between
smooth maps $X\to M$ and those smooth maps $G\to M$ that are constant on cosets $gH$.
Thus we can apply the last paragraph of the proof of Theorem~\ref{t:1} (the equivalence
(3) $\Leftrightarrow$ (4)) in this situation, and we conclude that the fact that 
the embedding $H\to G$ is an epimorphism in the category \LCG\ is equivalent to the following:

\smallskip
{\em (*) If $X$ is a compact connected smooth manifold, $G=\Diff X$, and
$F$ is a locally convex Lie $G$-group, then every $G$-equivariant smooth mapping
$j:X\to F$ is constant.}
\smallskip

\noindent{{\bf Remarks.}}
1. Let $X$ be a compact connected manifold and $G=\Homeo(X)$. Let $H\sbs G$
be the stability subgroup at some point of $X$. Then the inclusion
$H\to G$ is an epimorphism in \TopGr\ \cite{U2}. In virtue of Theorem~2.1,
this assertion is equivalent to the following:

\smallskip

{\it For any topological $G$-group $F$ any $G$-equivariant mapping
$j: X\to F$ is constant}.

\smallskip

The latter was proved in \cite[Example 3.7]{Me} (for the case $X$ is a cube or a sphere,
but the general case can be proved by the same method, Theorem~3.5 of \cite{Me} applies).
Thus, as noted in \cite{Pe, P2}, the solution of the epimorphism problem for Hausdorff topological
groups obtained in \cite{U1, U2} can be deduced from Megrelishvili's
results.

We sketch the proof of the fact that $j:X\to F$ (as above) must be constant. 
Let $x,y\in X$. Connect $x$ and $y$ by a smooth arc in $X$, and pick points
$a_0=x, b_0, a_1, b_1,\dots, a_n, b_n=y$ going along the arc so that we get
a fine partition of the arc. If every $b_{i-1}$ is very close to $a_i$, the
element 
$$
h=j(a_0)j(b_0)\obr j(a_1)j(b_1)\obr\dots j(a_n)j(b_n)\obr
$$
of the group $F$ is very close to $j(x)j(y)\obr$. On the other hand, we can find
$g\in G$ close to identity such that $g(a_i)$ is very close to $g(b_i)$
(\footnote{A neighborhood of our arc in $X$ can be identified with a Euclidean space so that
the arc corresponds to a straight line segment; in that case the existence of $g$
is geometrically obvious.}), and then
$gh=j(ga_0)j(gb_0)\obr j(ga_1)j(gb_1)\obr\dots j(ga_n)j(gb_n)\obr$ is close to $e_F$, 
the identity of $F$.
Thus the element $c=j(x)j(y)\obr$ of $F$ has the following property: we can find $h$ close
to $c$ and $g\in G$ close to $e_G$ so that $gh$ is close to $e_F$. It follows that $c=e_F$
and $j(x)=j(y)$.

2. If $G=\Diff X$ and $H\sbs G$ are as in Theorem~\ref{t:1}, the inclusion
$H\to G$ is {\em not} an epimorphism in the category \TopGr. Indeed, according to 
Theorem~\ref{t:2.1}, it suffices to construct a topological vector space $V$,
a jointly continuous linear representation of $G$ on $V$, and a non-constant $G$-equivariant map 
from $X$ to $V$. We can take for $V$ the space generated by points of $X$, that is, the space
of measures on $X$ with a finite support. Let us describe the topology on $V$ that suits our
purposes.

Let $V_0$ be the hyperplane in $V$ consisting of all measures of total mass zero. It will be sufficient
to describe a locally convex topology on $V_0$ such that the action of $G$ on $V_0$ 
is jointly continuous. Every metric
$d$ on $X$ gives rise to the ``transportation metric" (also known as the Kantorovich-Rubinstein metric, and under many other names, see \cite{villani}) $\bar d$ on $V_0$ defined by
$$
\bar d(v,0)=\inf\left\{\sum |c_i| d(x_i,y_i): v=\sum c_i(x_i-y_i)\right\},\ c_i\in \R, \ x_i, y_i\in X
$$
and $\bar d(u,v)=\bar d(u-v,0)$.
Consider a smooth Riemannian metric $d$ on $X$ and
the topology on $V_0$ generated by $\bar d$. 
If $g\in G$ is close to identity
in the $C^1$-topology, $g$ is a $C$-Lipschitz tranformation of $(X,d)$, where $C>1$ is a given
constant.
The joint continuity of the action of $G$ on $V_0$ easily follows (Proposition~\ref{p:Lipsh}).

Comparing this remark with the previous one, one can conclude that if the embedding $H\to G$ 
in the category \TopGr\ is an epimorphism, then the action of $G$ on $G/H$ is ``far from being smooth"
in a certain sense.

3. Epimorphisms in the the category {\bf AlgGr}
of linear algebraic groups over a given algebraically closed field
need not have a dense range \cite{Bor}
(we are grateful to Ugo Bruzzo for pointing out to us this reference as well as \cite{Brion}, and for his help with a copy of the article).
For example, if $G$ is a connected
linear algebraic group and $H$ is a parabolic subgroup of $G$, then the embedding
$H\to G$ is an epimorphism in {\bf AlgGr} (this readily follows from the fact that
every morphism of the projective variety $G/H$ to an affine variety is constant).
See \cite{Brion} for recent advances in this topic.

\section
{Idea of the proof of Main Theorem} 
\label{s:3}
Let $X$ be a compact connected
smooth
manifold, $G=\Diff X$, $H=\{f\in G:f(p)=p\}$ for some $p\in X$.
We want to prove the following:

\smallskip

{\it if $F$ is a locally convex Lie $G$-group and $j:X\to F$ is a
$G$-equivariant smooth mapping, then $j$ is constant.}

\smallskip

That will suffice, see Theorem~\ref{t:2.2} and the discussion after that.

Let $\frf$ be the Lie algebra
of $F$ \cite{M}, \cite{H}, \cite{Ne}. Then $G$ acts on $\frf$, and the action map
$G\ti \frf\to\frf$ is jointly continuous (in fact, even smooth, see Lemma~\ref{l:joint}). 
Equip the dual space $\frf^*$
with the topology of uniform convergence on compact sets. If $\frf$
is a Fr\'echet space, then $\frf^*$ is $\si$-compact (= the union of countably many compact
sets), see Lemma~\ref{l:4}. In general $\frf^*$ is covered by $G$-invariant $\sigma$-compact
subspaces (Corollary~\ref{c:1}). There is a natural 
mapping $j^*:\frf^*\to \Om^1(X)$, where $\Om^1(X)$ is the space 
of $C^\infty$-smooth differential 1-forms on $X$, equipped with the $C^\infty$-topology 
(= the topology of uniform convergence of all derivatives). The construction of $j^*$
is explained below. The mapping $j^*$ is continuous (Proposition~\ref{p:1}).
It is $G$-equivariant since $j$ is.
We shall show in the next section that
$\Omega^1(X)$ has no non-zero $G$-invariant $\sigma$-compact subspaces
(Proposition~\ref{p:2}). Since $\frf^*$ is covered by 
$G$-invariant $\sigma$-compact subspaces, it follows that $j^*=0$. 
This means that $j$ is constant.
%

If $M$ is a locally convex manifold and $V$ is a locally convex vector space, 
a {\em smooth $V$-valued differential 1-form} on  $M$ is a smooth mapping of the tangent
bundle of $M$ to $V$ which is linear on every $T_xM$, the tangent space to $M$ at $x\in M$.
The space $\frf^*$ can be identified with the space of 
all smooth left-invariant differential 1-forms on $F$, and the mapping $j^*:\frf^*\to \Om^1(X)$ 
from the previous paragraph is just the restriction of the natural map $\Omega^1(F)\to\Omega^1(X)$.

An alternative way to describe $j^*$ is the following.
Consider the {\em canonical} $\frf$-valued 1-form $\theta$ on $F$ such that $\theta(gv)=v$
for every $g\in F$ and $v\in \frf$, where $gv$ denotes the tangent vector to $F$ at $g$ obtained
from $v$ by the left translation by $g$. Consider the space $\Om^1(X, \frf)$
of all smooth $\frf$-valued 1-forms on $X$. Let $\tau\in \Omega^1(X,\frf)$ be the inverse image
of $\theta$ under $j$. In other words, if $u$ is a tangent vector to $X$ at $x$, then 
$\tau(u)=\theta(j_*(u))$, where $j_*$ is the differential of $j$ at $x$. 
Every $h\in \frf^*$ induces a natural map 
$h_*:\Om^1(X, \frf)\to \Om^1(X, \R)=\Om^1(X)$, and we have $j^*(h)=h_*(\tau)$. 
Indeed, both forms have equal values on any vector $u$ tangent to $X$, namely, 
the value that equals $h(\theta(j_*(u)))$.

\section
{Details of the proof}
\label{s:4}
It remains to prove the statements mentioned in Section~3.

\begin{lemma}
\label{l:joint}
If $G$ and $F$ are locally convex groups and $G$ acts smoothly on $F$ by automorphisms, 
the the action of $G$ on the Lie algebra of $F$ is jointly continuous (in fact, smooth).
\end{lemma}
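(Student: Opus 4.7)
The plan is to derive smoothness (and hence joint continuity) of the induced action $G \times \frf \to \frf$ directly from the smoothness of $\sigma: G \times F \to F$ by applying the tangent functor in Neeb's framework \cite{Ne}. The crucial observation is that since $\sigma$ acts by automorphisms, one has $\sigma(g,e_F)=e_F$ for every $g\in G$, where $e_F$ denotes the identity of $F$; hence differentiating in the $F$-coordinate at $e_F$ lands again in $\frf = T_{e_F}F$, and $T_{e_F}\sigma_g$ is precisely the induced action on the Lie algebra.

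First I would use the fact that $T\sigma: T(G\times F)\to TF$ is smooth, together with the canonical identification $T(G\times F)\cong TG\times TF$ and the smoothness of the zero section $Z: G\to TG$, $g\mapsto 0_g$, to build the smooth map
\[
\Phi: G \times TF \to TF, \qquad \Phi(g,w) = T\sigma(0_g,\, w).
\]
Next I would restrict $\Phi$ to $G \times \frf \subset G \times TF$. For $v\in T_{e_F}F$, the product-rule decomposition of $T_{(g,e_F)}\sigma$ gives $\Phi(g,v) = T_{e_F}\sigma_g(v)$, which lies in $T_{\sigma(g,e_F)}F = T_{e_F}F = \frf$. Therefore $\Phi$ restricts to a smooth map $G \times \frf \to \frf$ that, by construction, is the induced $G$-action on $\frf$.

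The argument is essentially formal once the tangent functor is at hand, so there is no genuine obstacle. The only points that need a small amount of care are the product rule $T(G\times F)\cong TG\times TF$ and the smoothness of the zero section in the locally convex category; both are standard in \cite{Ne}. I would cite these facts rather than prove them, keeping the lemma's proof to a few lines.
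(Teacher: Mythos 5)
Your argument is correct, but it takes a genuinely different route from the paper. The paper's proof is a one-line reduction: it observes that the $G$-action on $\frf$ is the restriction to $G\times\frf$ of the adjoint representation of the semidirect product $G\ltimes_\si F$ (which exists in \LCG, as noted in Section~\ref{s:2}), and then cites Neeb's result that the adjoint representation of a locally convex Lie group is smooth (\cite{Ne}, Ex.~II.3.9). You instead differentiate the action map $\si$ directly: composing the smooth tangent map $T\si:TG\times TF\to TF$ with the zero section of $TG$ and restricting to the fiber $\frf=T_{e_F}F$, using $\si(g,e_F)=e_F$ to see that the image stays in $\frf$ and that the resulting map is $(g,v)\mapsto T_{e_F}\si_g(v)$, i.e.\ the derived action. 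Both arguments are sound. Yours is more self-contained --- it uses only functoriality of $T$, the product rule $T(G\times F)\cong TG\times TF$, and the smoothness of the zero section, without forming the semidirect product or invoking the adjoint representation --- at the cost of one small point you should make explicit: the corestriction of your smooth map $G\times\frf\to TF$ to the fiber $\frf$ is smooth, which follows from local triviality of $TF$ near $e_F$. The paper's version buys brevity by outsourcing exactly this kind of bookkeeping to the cited exercise.
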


\begin{proof}
The action of $G$ on the Lie algebra $\mathfrak f$ on $F$ is exactly the restriction to $G\times {\mathfrak f}$ of the adjoint representation of the semidirect product $G\ltimes F$, viewed as a map $(G\ltimes F)\times ({\mathfrak g}\ltimes {\mathfrak f})\to {\mathfrak g}\ltimes {\mathfrak f}$. But the adjoint representation is a smooth mapping (\cite{Ne}, p. 333, Ex. II.3.9).
\end{proof}

\begin{lemma}
\label{l:4}
If $E$ is a metrizable LCS, then the dual space $E^*$, equipped with the topology
of uniform convergence on compact sets, is $\sigma$-compact.
\end{lemma}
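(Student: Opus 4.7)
The plan is to exhibit $E^*$ as a countable union of compact sets, using the polars of a neighborhood basis at the origin. Since $E$ is metrizable, fix a countable decreasing basis $(U_n)_{n\in\N}$ of balanced convex neighborhoods of $0$ in $E$. For each $n$ let
\[
K_n=U_n^\circ=\{\varphi\in E^*:|\varphi(x)|\le 1 \text{ for all }x\in U_n\}
\]
be the polar. I will show $E^*=\bigcup_n K_n$ and that each $K_n$ is compact in the topology of uniform convergence on compact sets; that gives $\sigma$-compactness immediately.

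The inclusion $E^*\supset\bigcup_n K_n$ is obvious, and the reverse inclusion is the content of continuity: any continuous linear functional $\varphi$ on $E$ is bounded on some neighborhood of $0$, so after rescaling we may assume $|\varphi|\le 1$ on some $U_n$, giving $\varphi\in K_n$. Since $U_{n+1}\subset U_n$, the sequence $(K_n)$ is increasing, so $E^*$ is indeed the union.

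Next, each $K_n$ is equicontinuous by construction. By the Banach--Alaoglu theorem, $K_n$ is compact in the weak-$*$ topology (i.e.\ the topology of pointwise convergence). The crucial point is now a standard fact about equicontinuous sets: on any equicontinuous subset of $E^*$, the weak-$*$ topology, the topology of uniform convergence on precompact subsets of $E$, and the topology of uniform convergence on compact subsets of $E$ all coincide (see e.g.\ Bourbaki, \emph{Topological Vector Spaces}, Ch.~III, \S~3). Applied to $K_n$, this identifies its weak-$*$ compact topology with the subspace topology inherited from $E^*$ endowed with the compact-convergence topology; hence $K_n$ is compact in the latter as well.

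The only step where a reader might want a reminder is the coincidence of topologies on equicontinuous sets; its proof is a direct $\varepsilon/3$ argument using a finite $\varepsilon$-net in the compact set $C\subset E$ and the equicontinuity of $K_n$ at $0$. Granting this, the conclusion is immediate: $E^*=\bigcup_n K_n$ is a countable union of compact sets in the topology of uniform convergence on compact sets, so $E^*$ is $\sigma$-compact, as claimed.
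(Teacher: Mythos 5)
Your proof is correct and follows essentially the same route as the paper's: cover $E^*$ by the polars $U_n^\circ$ of a countable neighborhood basis, apply Banach--Alaoglu for weak-$*$ compactness, and use the coincidence of the weak-$*$ topology with the topology of compact convergence on equicontinuous sets (the paper cites the same Bourbaki reference). No gaps.
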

This is of course well known. We decided to include a proof, since this is a crucial
point in our arguments.

\begin{proof}
Pick a countable base $(U_n)$ of neighborhoods of zero in $E$. Then $E^*=\bigcup U_n^\circ$, 
where $U^\circ$ is the polar set, $U^\circ=\{f\in E^*:\sup\{|f(x)|:x\in U\}\le1\}$. Every 
$U_n^\circ$ is compact with respect to the $w^*$-topology (the Banach--Alaoglu Theorem).
Since $U_n^\circ$ is equicontinuous, the $w^*$-topology
on $U_n^\circ$ is the same as the topology of uniform convergence on compact sets
\cite[Ch.~3, page~17, Proposition 5]{BourbEVT}.
\end{proof}

\begin{proposition}
\label{p:1}
Let $X$ be a compact smooth manifold, $F$ a locally convex Lie group, 
$\frf$ the Lie algebra of $F$, 
$j:X\to F$ a smooth mapping.
The mapping $j^*:\frf^*\to \Om^1(X)$ considered in Section~3 is continuous.
(The topology on $\frf^*$ is the topology of uniform convergence on compact 
subsets of $\frf$.)
\end{proposition}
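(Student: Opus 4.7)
The plan is to reduce everything to a finite cover of $X$ by coordinate charts and then estimate in local coordinates. Since $X$ is compact, cover it by finitely many relatively compact coordinate domains $(U_\beta,\varphi_\beta)$. In each chart a smooth $1$-form is given by its component functions, and the $C^\infty$-topology on $\Om^1(X)$ is generated by the finitely many seminorms
\[
\norm{\omega}_{\beta,k}=\sup\bigl\{\,|\partial^\alpha \omega_{\beta,i}(x)|:x\in\overline{U_\beta},\ 1\le i\le n,\ |\alpha|\le k\,\bigr\},
\]
where $\omega|_{U_\beta}=\sum_i\omega_{\beta,i}(x)\,dx^i$. It therefore suffices to bound each such seminorm applied to $j^*(h)$ by some seminorm of $h$ in the topology of uniform convergence on compact subsets of $\frf$.

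Fix a chart $(U,x^1,\dots,x^n)$. Using the trivialization $TX|_U\cong U\ti\R^n$ and the linearity of $\tau$ in the fibre direction, we can write
\[
\tau(x,v)=\sum_{i=1}^n v^i\tau_i(x),\qquad \tau_i:U\to\frf\ \text{smooth}.
\]
Composing with $h\in\frf^*$ gives $j^*(h)|_U=\sum_i (h\circ\tau_i)(x)\,dx^i$, so the local components of $j^*(h)$ are simply $h\circ\tau_i$. Because $h$ is continuous and linear, it commutes with all iterated partial derivatives of the Bastiani-smooth $\frf$-valued maps $\tau_i$, yielding the identity
\[
\partial^\alpha (h\circ\tau_i)(x)=h\bigl(\partial^\alpha \tau_i(x)\bigr).
\]

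For a compact $K\sbs U$ and an integer $k\ge 0$, the set
\[
C_{K,k}=\bigl\{\partial^\alpha \tau_i(x):x\in K,\ |\alpha|\le k,\ 1\le i\le n\bigr\}
\]
is a finite union of continuous images of the compact set $K$, hence compact in $\frf$. The identity above then gives
\[
\norm{j^*(h)}_{K,k}=\sup_{v\in C_{K,k}}|h(v)|,
\]
which is exactly the value at $h$ of the seminorm on $\frf^*$ associated with the compact set $C_{K,k}\sbs\frf$ in the topology of uniform convergence on compacta. Linearity of $j^*$ combined with the finite cover of $X$ produces continuity.

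The one step I would justify with care is the commutation of $h$ with all the $\partial^\alpha$, which is really the heart of the matter. In the Bastiani/locally convex calculus used in \cite{Ne}, it follows by induction on $|\alpha|$ from the chain rule together with the fact that $dh=h$ for a continuous linear functional. Beyond that, the argument is a routine combination of the compactness of $X$, the fibrewise linearity of $\tau$, and the basic observation that a smooth map into an LCS, together with all its partial derivatives up to order $k$, has compact image on any compact subset of the domain.
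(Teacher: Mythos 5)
Your argument is correct and is essentially the paper's own proof: in both cases the key points are that $j^*(h)=h_*(\tau)$ for the pulled-back canonical form $\tau\in\Om^1(X,\frf)$, that the continuous linear functional $h$ commutes with differentiation of smooth $\frf$-valued maps, and that the resulting set of derivative values of $\tau$ is compact in $\frf$, so a basic seminorm of $j^*(h)$ is controlled by the sup of $|h|$ on that compact set. The only difference is bookkeeping: the paper describes the $C^\infty$-topology on $\Om^1(X)$ via iterated Lie derivatives $L_{v_1}\dots L_{v_k}$ along global vector fields and a Riemannian metric, whereas you use a finite atlas and partial derivatives of the local components $h\circ\tau_i$.
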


\begin{proof}
Pick a Riemannian metric on $X$.
For a smooth vector field $v$ on $X$ denote by $L_v$ the Lie differentiation
with respect to $v$ \cite[Ch.1, \S 3]{KN}.
A typical neighborhood of zero in $\Om^1(X)$ looks as follows: pick smooth vector
fields $v_1,\dots, v_k$ on $X$, and consider those forms $\om\in \Om^1(X)$ for which
the form $\eta=L_{v_1}\dots L_{v_k}\om$ has a small norm,
that is, $|\eta(u)|<\e$ for all unit tangent vectors $u$.

Suppose vector fields $v_1,\dots, v_k$ and $\e>0$ are given. We must prove the following:
if $h\in \frf^*$ is small on a certain compact set $K$ and $\om=j^*(h)$, 
then $|L_{v_1}\dots L_{v_k}\om(u)|<\e$
for all unit tangent vectors $u$. Consider the space $\Om^1(X, \frf)$
of all smooth $\frf$-valued 1-forms on $X$. 
If $v$ is a smooth vector field on $X$,
the Lie differentiation $L_v$ is well-defined as a map from $\Om^1(X, \frf)$ to itself.

Consider the forms $\theta\in \Omega^1(F, \frf)$ and $\tau\in\Om^1(X, \frf)$
introduced in the previous section:
$\theta$ is the canonical $\frf$-valued left-invariant 1-form on $F$, and
$\tau=j^*(\theta)$. 
Denote by $K$
the compact subset of $\frf$ consisting of all vectors of the form 
$L_{v_1}\dots L_{v_k}\tau(u)$, where $u$ runs over all unit tangent vectors to $X$.
If $h$ is $\e$-small on $K$, $\om=j^*(h)=h_*(\tau)$, and $u$ is a unit tangent vector to $X$,
then $L_{v_1}\dots L_{v_k}\om(u)=h_*(L_{v_1}\dots L_{v_k}\tau)(u)$, since $h_*$ commutes with 
every Lie differentiation $L_v$. The last expression belongs to $h(K)$ and hence is $\e$-small.
Thus $j^*$ is continuous.
\end{proof}

A topological space is {\em Polish} if it is homeomorphic to a complete separable
metric space. A topological space is Polish if and only if it is \v{C}ech-complete
and has a countable base (see \cite{E}, Th. 4.3.26). If a topological group with a countable base is locally 
\v{C}ech-complete (that is, admits a \v{C}ech-complete neighborhood of the identity),
then it is \v{C}ech-complete (\cite{AT}, Proposition 4.3.17), hence Polish. In particular, every FL-group with a countable base is Polish.

Recall that a set in a topological space is {\it meagre\/} if it is
contained in the union of countably many nowhere dense sets. If a space $Y$
is completely metrizable (that is, admits a compatible complete metric),
then no meagre subset of $Y$ contains interior points, in virtue of
the Baire category theorem.

\begin{lemma}
\label{l:2}
Suppose that a topological group $G$ acts continuously on a Hausdorff space
$Y$. Suppose that for every neighbourhood $U$ of unity in $G$ and for
every $y\in Y$ the closure of the set $Uy$ in $Y$ is not compact. If $G$ is
not meagre in itself (in particular, if $G$ is Polish),
then $Y$ contains no non-empty $G$-invariant $\si$-compact subsets.
\end{lemma}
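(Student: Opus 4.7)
The plan is to argue by contradiction using a straightforward Baire category argument applied to the orbit map. Suppose $Z \subseteq Y$ is a non-empty $G$-invariant $\sigma$-compact subset, and write $Z = \bigcup_{n\in\N} K_n$ with each $K_n$ compact. Pick any $y \in Z$; since $Z$ is $G$-invariant, the orbit map $\phi_y\colon G \to Y$, $g\mapsto gy$, takes values in $Z$, so the preimages $\phi_y\obr(K_n)$ cover $G$. Each $K_n$ is closed in $Y$ (being compact in a Hausdorff space), so by continuity each $\phi_y\obr(K_n)$ is closed in $G$.

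Next I would invoke the assumption that $G$ is not meagre in itself. Since $G=\bigcup_n \phi_y\obr(K_n)$ is a countable union of closed sets, some $\phi_y\obr(K_n)$ is not nowhere dense, and being closed it therefore has non-empty interior. Pick $g_0 \in G$ and an open neighborhood $V$ of $g_0$ with $V \sbs \phi_y\obr(K_n)$, that is, $Vy \sbs K_n$. Setting $U = V g_0\obr$, which is an open neighborhood of the identity, and letting $y' = g_0 y \in Y$, we obtain $Uy' = Vy \sbs K_n$, so the closure of $Uy'$ in $Y$ is contained in the compact set $K_n$ and is therefore compact. This contradicts the hypothesis that $\overline{Uy'}$ is non-compact for every neighborhood $U$ of unity and every $y'\in Y$.

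For the parenthetical claim, recall that a Polish space is completely metrizable and hence a Baire space by the Baire category theorem, so it cannot be expressed as a countable union of nowhere dense subsets of itself; in particular it is not meagre in itself.

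The only delicate point is making sure the covering argument is applied to the preimages in $G$ rather than to $Z$ itself (which need not be Baire), and recording that the $\phi_y\obr(K_n)$ are closed so that ``not nowhere dense'' upgrades to ``has non-empty interior.'' Everything else is bookkeeping with translates, so I do not expect a real obstacle.
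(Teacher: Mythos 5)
Your proof is correct and follows essentially the same route as the paper's: pull the compact pieces back along the orbit map $g\mapsto gy$, note the preimages are closed, and use the non-meagreness of $G$ together with the translation trick $U=Vg_0^{-1}$, $y'=g_0y$ to contradict the non-compactness hypothesis. The paper states the key point more tersely (``each $F_n$ is closed and has no interior points''), but the justification it has in mind is exactly the translate argument you spell out.
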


\begin{proof}
Let $y\in A=\bigcup_{n=0}^\infty K_n\sbs Y$, where each $K_n$ is compact.
We must show that the orbit $Gy$ is not contained in $A$. Let
$F_n=\{g\in G: gy\in K_n\}$. Then each $F_n$ is closed in $G$ and has no
interior points. Since $G$ is not meagre, there exists a $g\in G\setminus
\bigcup_n F_n$, and we have $gy\notin A$.
\end{proof}

\begin{proposition}
\label{p:2}
If $X$ is a compact manifold and $G=\Diff X$, the space $\Om^1(X)$ contains
no non-zero $G$-invariant $\si$-compact subspaces.
\end{proposition}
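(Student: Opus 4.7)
The plan is to apply Lemma~\ref{l:2} to $Y=\Om^1(X)\setminus\{0\}$, equipped with the pullback action of $G=\Diff X$. The standing hypotheses are easily arranged: $G$ is an FL-group with countable base, hence Polish (as recorded just before Lemma~\ref{l:2}) and in particular not meagre in itself; the pullback action $G\times\Om^1(X)\to\Om^1(X)$ is continuous in the respective $C^\infty$-topologies by the chain rule. Once the orbit-closure condition of Lemma~\ref{l:2} is verified, the lemma yields that $Y$ contains no non-empty $G$-invariant $\sigma$-compact subset; equivalently (since any $\sigma$-compact $G$-invariant $A\sbs\Om^1(X)$ either equals $\{0\}$ or $A\setminus\{0\}$ is a non-empty $G$-invariant $\sigma$-compact subset of $Y$, using that $A\setminus\{0\}$ is open in the compact-metric sets covering $A$ and therefore $\sigma$-compact), the only $G$-invariant $\sigma$-compact subspace of $\Om^1(X)$ is $\{0\}$, which is the desired conclusion.

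The heart of the proof is to check that for every neighborhood $U$ of the identity in $G$ and every non-zero $\om\in\Om^1(X)$, the closure of $U\om$ is not compact in $\Om^1(X)$. Since $\Om^1(X)$ is a Fr\'echet space whose topology is generated by the countable family of $C^k$-seminorms $\|\cdot\|_{C^k}$ (after fixing a Riemannian metric and a finite atlas on $X$), and since compact subsets of a Fr\'echet space are bounded in every seminorm, it suffices to exhibit some $k$ such that $\{f^*\om:f\in U\}$ is unbounded in $\|\cdot\|_{C^k}$. A basic neighborhood of the identity in $\Diff X$ has the form $V_{m,\d}=\{f:\|f-\mathrm{id}\|_{C^m}<\d\}$ for some integer $m\ge 1$ and some $\d>0$, so only finitely many derivatives of $f$ are controlled.

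I would then construct a family of highly oscillating diffeomorphisms inside $V_{m,\d}$ whose pullbacks of $\om$ have unbounded $C^m$-norm. Pick a point $p\in X$ with $\om_p\ne 0$, a chart around $p$ in which $\om=\sum_i a_i\,dx_i$ with $a_1(p)\ne 0$, and a bump $\psi$ supported in the chart with $\psi(p)=1$. For large $\lambda>0$ set
\[
f_\lambda(x)=x+\varepsilon_\lambda\,\psi(x)\sin(\lambda x_1)\,e_1,\qquad \varepsilon_\lambda=\d/(C\lambda^m),
\]
extended by the identity outside the chart, where $C$ is a constant depending on $\psi$. All derivatives of $f_\lambda-\mathrm{id}$ of order at most $m$ are bounded by $O(\varepsilon_\lambda\lambda^m)=O(\d/C)$, so $f_\lambda\in V_{m,\d}\sbs U$ and is a diffeomorphism for large $\lambda$. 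On the other hand, the pullback $f_\lambda^*\om$ acquires the term $a_1(f_\lambda(x))\cdot\partial_1(\varepsilon_\lambda\psi(x)\sin(\lambda x_1))\,dx_1$, whose dominant oscillatory piece is $\varepsilon_\lambda\lambda\,a_1\,\psi\cos(\lambda x_1)\,dx_1$; its $C^k$-norm is of order $\varepsilon_\lambda\lambda^{k+1}\sim\lambda^{k+1-m}$, which tends to infinity as soon as $k\ge m$. Hence $\{f_\lambda^*\om\}_{\lambda\to\infty}\sbs U\om$ is unbounded in the $C^m$-seminorm, so $\overline{U\om}$ is not compact, and Lemma~\ref{l:2} applies.

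The main obstacle is exactly this perturbation estimate. It is the point at which the difference between the $C^\infty$-topology on $\Diff X$ and that on $\Om^1(X)$ becomes essential: a neighborhood in $\Diff X$ controls only finitely many derivatives, and so permits oscillations at amplitude $\sim\lambda^{-m}$ that are invisible up to order $m$; pullback of a $1$-form involves the Jacobian $Df$, which brings in one extra derivative of the perturbation and thereby amplifies these oscillations at the very next level of seminorms. The rest is bookkeeping.
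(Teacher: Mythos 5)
Your proposal is correct and follows essentially the same route as the paper: apply Lemma~\ref{l:2} to $Y=\Om^1(X)\setminus\{0\}$ and show $\overline{U\om}$ is non-compact by exploiting the fact that a basic neighborhood of the identity in $\Diff X$ controls only finitely many derivatives, so that pullback by suitably oscillating diffeomorphisms is unbounded in a higher $C^k$-seminorm. The paper only sketches this for $X$ a circle (``the $(k+1)$-th derivative of $g\in U$ can be arbitrarily large''), whereas you carry out the explicit perturbation $f_\lambda=x+\varepsilon_\lambda\psi\sin(\lambda x_1)e_1$ in a chart of a general manifold; this is a legitimate filling-in of the same idea, not a different method.
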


\begin{proof} (\footnote{The referee pointed out that the tools of \cite{Dedi}
can be used to prove this proposition.})
Apply Lemma~\ref{l:2} to the group $G=\Diff X$
and the space $Y=\Om^1(X)\setminus\{0\}$. Since $G$ is Polish, 
it suffices to check that for any neighbourhood $U$
of the identity in $G$, any non-zero 1-form $\om\in\Om^1=\Om^1(X)$
and any compact subset $K\sbs\Om^1$ we have $U\om\not\sbs K$.
We'll confine ourselves to the case when $X$ is a circle. Let $\om=f(\theta)\,d\theta$.
Assume that $U$ consists of all diffeomorphisms $g$ of $X$ such that the first $k$ derivatives
of $g$ are close to those of the identity map. As the $(k+1)$-th derivative of $g\in U$
can be arbitrary large, the differential forms $g^*(\om)=f(g(\theta))g'(\theta)\,d\theta$,
where $g$ runs over $U$,
cannot all lie in a compact set.
\end{proof}

If $G$ is a Polish group that acts on an LCS $V$ by linear transformations so that
the action law $G\times V\to V$ is jointly continuous, we say that $V$ is a $G$-module.

\begin{proposition}
\label{p:factor}
If $G$ is a Polish group and $V$ is a $G$-module, the topology of $V$ is generated by
$G$-morphisms of $V$ to metrizable $G$-modules. 
\end{proposition}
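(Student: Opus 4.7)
The plan is to produce, for each continuous seminorm $p$ on $V$, a metrizable $G$-module $W$, a $G$-morphism $\phi \colon V \to W$, and a continuous seminorm $\bar p$ on $W$ with $p = \bar p \circ \phi$. This suffices, since it exactly exhibits the topology of $V$ as generated by $G$-morphisms to metrizable $G$-modules.

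The first step is a joint-continuity lemma: for every continuous seminorm $p$ on $V$ and every $g_0 \in G$, there exist a neighbourhood $N$ of $g_0$ in $G$ and a continuous seminorm $q$ on $V$ such that $p(gv) \leq q(v)$ for all $g \in N$ and $v \in V$. Indeed, joint continuity of the action at $(g_0, 0)$ applied to the $0$-neighbourhood $\{v : p(v) < 1\}$ provides, by the LCS structure of $V$, such an $N$ together with a continuous seminorm $q$ so that $g \in N$ and $q(v) < 1$ imply $p(gv) < 1$; linearity of the action combined with the homogeneity of $p$ and $q$ then upgrades this implication to the desired inequality for all $v \in V$.

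The second step exploits the fact that $G$ is Polish, hence Lindelöf. For any continuous seminorm $p$, covering $G$ by countably many of the neighbourhoods from the previous lemma produces a countable family $\Phi(p)$ of continuous seminorms on $V$ with the property that for every $g_0 \in G$ there exist $q \in \Phi(p)$ and a neighbourhood $N$ of $g_0$ with $p(gv) \leq q(v)$ for all $g \in N$ and $v \in V$. Starting from $\mathcal{F}_0 = \{p\}$ and iteratively setting $\mathcal{F}_{k+1} = \mathcal{F}_k \cup \bigcup_{q \in \mathcal{F}_k} \Phi(q)$, I obtain a countable family $\mathcal{F} = \bigcup_k \mathcal{F}_k$ of continuous seminorms on $V$ enjoying the following closure property: for every $q \in \mathcal{F}$ and every $g_0 \in G$ there exist $q' \in \mathcal{F}$ and a neighbourhood $N$ of $g_0$ with $q(gv) \leq q'(v)$ for all $g \in N$ and $v \in V$.

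Let $N_0 = \bigcap_{q \in \mathcal{F}} q^{-1}(0)$ and let $W = V/N_0$, equipped with the locally convex topology generated by the seminorms in $\mathcal{F}$ (passed to the quotient); countability of $\mathcal{F}$ makes $W$ a Hausdorff metrizable LCS. The closure property immediately forces $N_0$ to be $G$-invariant, so the $G$-action descends to a linear action on $W$. For its joint continuity I use the decomposition $gv - g_0 v_0 = g(v - v_0) + (g v_0 - g_0 v_0)$: for any generating seminorm $q_\alpha$ on $W$, the first summand is bounded by $q_\beta(v - v_0)$ on a neighbourhood of $g_0$ (for some $q_\beta \in \mathcal{F}$ supplied by the closure property), while the second summand tends to $0$ in $V$ (and hence in any fixed continuous seminorm on $V$) by continuity of $g \mapsto g v_0$. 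Since $p$ itself belongs to $\mathcal{F}$, it descends to a continuous seminorm on $W$, and the quotient map $V \to W$ is the desired $G$-morphism. The main technical obstacle is precisely the closure construction of $\mathcal{F}$: a one-step countable approximation does not make the common kernel $G$-invariant or the induced action on the quotient continuous, and the iterative $\Phi$-closure is exactly what makes $W$ a genuine $G$-module.
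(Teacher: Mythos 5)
Your proof is correct and follows essentially the same route as the paper's: both use joint continuity of the action at points $(g,0)$ together with the Lindel\"of property of the Polish group $G$ to extract countable families, iterate countably many times to obtain the closure property, and then pass to the resulting countably-generated (pseudo)metrizable quotient, checking joint continuity of the induced action via the standard decomposition $gv-g_0v_0=g(v-v_0)+(gv_0-g_0v_0)$. The only difference is cosmetic: you work with continuous seminorms where the paper works with convex symmetric neighborhoods of zero.
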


In more detail, the assertion is that for every neighborhood $U$ of zero in $V$ there exist
a $G$-morphism $p:V\to V'$ to a metrizable $G$-module $V'$ and a neighborhood $U'$ of zero in $V'$
such that $p\obr(U')\sbs U$.

\begin{proof}
Let $U$ be a given neighborhood of zero in $V$. For every $g\in G$ find a neighborhood $W_g$ of $g$
in $G$ and a neighborhood $U_g$ of zero in $V$ so that $W_gU_g\sbs U$. There is a countable collection
of $W_g$'s that cover $G$; consider the corresponding $U_g$'s. With each of these $U_g$ do the same
as we did with $U$. We get a larger countable collection of neighborhoods of zero. Proceed in a similar
manner. In countably many steps we obtain a countable collection $\sB$ of neighborhoods of zero such that for every $O\in \sB$ and $g\in G$ there exist $O'\in \sB$ and a neighborhood $W$ of $g$ in $G$
such that $WO'\sbs O$. We can throw in finite intersections and images under homotheties 
with coefficient $1/2$. We assume that all neighborhoods in $\sB$ are convex and symmetric. In this way
we can achieve that $\sB$ is a filter base, and the filter $\sF$ generated by $\sB$ is the filter of neighborhoods for some pseudometrizable locally convex topology $\sT$ on $V$ that is coarser that 
the original topology $\sT_0$. By our construction, the map $G\times V\to V$ is continuous with respect to $\sT$ at every point of the form $(g,0)$. We claim that it is continuous everywhere.
Indeed, let $(g,x)\in G\ti V$ and $O\in \sB$. Pick a neighborhood $W$ of $g$ and $O'\in \sB$ so that 
$WO'\sbs O$. Shrinking $W$ if necessary, we may assume that $Wx\sbs x+O$ (we just use the 
$\sT_0$-continuity of the action). Then $W(x+O')\sbs Wx+WO'\sbs x+O+O$, and the $\sT$-continuity
of the action follows. We take for $V'$ the metrizable space associated with the pseudometrizable space
$(V,\sT)$.
\end{proof}

\begin{corollary}
\label{c:1}
Suppose $G$ is a Polish group and $V$ is a $G$-module. Equip $V^*$ with the topology of uniform
convergence on compact sets. The space $V^*$ is covered by $G$-invariant $\sigma$-compact subspaces
(which may be non-closed). 
\end{corollary}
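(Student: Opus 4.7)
The plan is to factor each functional $h\in V^*$ through a $G$-equivariant quotient of $V$ onto a metrizable $G$-module, and then transport the $\sigma$-compactness of that dual back to $V^*$ via the transpose map.

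First, given $h\in V^*$, I would use continuity of $h$ at zero to pick a neighborhood $U$ of zero in $V$ with $\abs{h}\le 1$ on $U$. Proposition~\ref{p:factor} then supplies a $G$-morphism $p\colon V\to V'$ into a metrizable $G$-module $V'$, together with a neighborhood $U'$ of zero in $V'$ satisfying $p^{-1}(U')\subset U$. A homogeneity argument shows that $h$ vanishes on $\ker p$: if $v\in \ker p$, then $tv\in p^{-1}(U')\subset U$ for every $t>0$, so $\abs{h(v)}\le 1/t$, forcing $h(v)=0$. Hence $h$ factors as $h=h'\circ p$ for a linear functional $h'$ on $V'$, and its boundedness by $1$ on $U'$ shows $h'\in (V')^*$.

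Second, since $V'$ is metrizable, Lemma~\ref{l:4} tells us that $(V')^*$ is $\sigma$-compact in the topology of uniform convergence on compact sets. I would then consider the transpose $p^*\colon (V')^*\to V^*$, $\phi\mapsto \phi\circ p$. It is continuous because $p$ sends compact sets in $V$ to compact sets in $V'$, so uniform convergence on $p(K)$ yields uniform convergence of pullbacks on $K$. It is $G$-equivariant because $p$ is: for $\phi\in (V')^*$, $g\in G$, and $v\in V$, a direct computation using the convention $(g\cdot \phi)(y)=\phi(g^{-1}y)$ gives $g\cdot p^*(\phi)=p^*(g\cdot \phi)$. Therefore $p^*((V')^*)$ is a $G$-invariant $\sigma$-compact subset of $V^*$ containing $h$, and since $h$ was arbitrary these subsets cover $V^*$.

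The only delicate point is the factoring step in the first paragraph, which requires that the map $p$ supplied by Proposition~\ref{p:factor} have kernel small enough for $h$ to descend through it. This is already guaranteed by the construction there (where $V'$ arises as the Hausdorff quotient associated to a pseudometrizable topology on $V$, making $p$ surjective with well-controlled kernel), so the argument goes through without additional effort; the remaining verifications, namely the $\sigma$-compactness, continuity, and equivariance of $p^*$, are routine.
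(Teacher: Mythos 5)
Your proof is correct and follows essentially the same route as the paper's: both reduce to Proposition~\ref{p:factor} to factor through a metrizable $G$-module, apply Lemma~\ref{l:4} to its dual, and push forward along the ($G$-equivariant, continuous) transpose. You simply spell out the factoring of an individual $h\in V^*$ through $p$ (via the homogeneity argument on $\ker p$), which the paper leaves implicit in the phrase that the topology of $V$ is generated by such morphisms.
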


\begin{proof}
If $p:V\to W$ is a $G$-morphism to a metrizable $G$-module $W$, the image of $p^*:W^*\to V^*$
is $\sigma$-compact (Lemma~\ref{l:4}) and $G$-invariant. 
According to Proposition~\ref{p:factor}, $W^*$ is covered by subspaces of this sort.
\end{proof}

\section{The case of finite dimensional Lie groups}
\label{s:fdLie}

\begin{theorem}
Let $f\colon G\to H$ be a Lie group morphism between two finite dimensional Lie groups. Assume that $f$ is an epimorphism in the category of locally convex Lie groups 
(regular Fr\'echet--Lie groups, if $G$ has a countable base). 
Then $f$ has dense range.
\end{theorem}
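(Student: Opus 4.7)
The plan is to prove the contrapositive: if $f(G)$ is not dense in $H$, then $f$ is not an epimorphism. Assume therefore that $K := \overline{f(G)}$ is a proper subgroup of $H$. By the Cartan closed subgroup theorem, $K$ is an embedded Lie subgroup of the finite dimensional Lie group $H$, so $H/K$ is a non-trivial smooth manifold on which $H$ acts smoothly and transitively. I will apply criterion (3) of Theorem~\ref{t:2.2} (with $f$ in the r\^ole of $i$ and $H$ in the r\^ole of the acting group $G$ of that theorem) by producing a locally convex Lie $H$-group $F$ with an element fixed by $f(G)$ but not by $H$.

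The candidate for $F$ is $V := C^\infty(H/K,\R)$ equipped with the $C^\infty$-topology of uniform convergence of all derivatives on compact subsets. Viewed additively, $V$ is a complete locally convex space (in fact a Fr\'echet space when $H/K$ is second-countable, which holds once $H$ is second-countable), hence a regular abelian Lie group in the sense of \cite{Ne} with trivial exponential. Let $H$ act linearly on $V$ via the left regular representation $(h\cdot\phi)(xK) = \phi(h^{-1}xK)$. The step I expect to require the most care is verifying that the resulting action $H \times V \to V$ is \emph{smooth}; this is the standard smoothness of the regular representation of a finite dimensional Lie group on $C^\infty$ of a smooth $H$-manifold, and may be verified by writing the $n$-th directional derivative at $(h_0,\phi)$ as a composition of $n$ Lie derivatives of $\phi$ along left-invariant vector fields on $H$, all of which again lie in $C^\infty(H/K)$ and depend jointly continuously on the data. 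This makes $V$ simultaneously a locally convex Lie $H$-group and (in the second-countable case) a regular Fr\'echet--Lie $H$-group.

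To produce the required fixed vector I note that the double coset space $K\backslash H/K$ contains at least two elements, namely $K$ itself and $KhK$ for any $h \notin K$. Hence one can pick a non-constant smooth function $\phi\colon H/K \to \R$ satisfying $\phi(kxK) = \phi(xK)$ for all $k\in K$ and $x\in H$. By construction $\phi$ is $K$-fixed in $V$, hence $f(G)$-fixed since $f(G)\subseteq K$. However $\phi$ is not $H$-fixed: an $H$-invariant element of $V$ would be constant on the transitive $H$-space $H/K$. Theorem~\ref{t:2.2}\,(3) then implies that $f$ is not an epimorphism in $\LCG$, contradicting the hypothesis, and the same $V$ handles the parenthetical refinement. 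The sole nontrivial obstacle is the smoothness of the $H$-action on $C^\infty(H/K)$; should that prove awkward, I would fall back on $V = C^\infty(H)$ with the right regular action and a non-constant right $K$-invariant smooth function on $H$, for which the same argument goes through.
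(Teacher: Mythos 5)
Your primary route has a genuine gap at the step ``the double coset space $K\backslash H/K$ contains at least two elements, hence one can pick a non-constant smooth function $\phi\colon H/K\to\R$ with $\phi(kxK)=\phi(xK)$.'' That inference is false: the double cosets need not be closed, and a single double coset can be dense in $H/K$, in which case every \emph{continuous} bi-$K$-invariant function is constant. This happens precisely in the example featured in Theorem~\ref{t:utsl2}: for $H=SL_2(\R)$ and $K=B$ the upper-triangular subgroup, $H/K\cong\R P^1$ and $K$ acts with a fixed point and a dense open orbit, so $C^\infty(H/K)$ with the left regular action has no non-constant $K$-fixed vector at all. So the main construction fails exactly where it matters most. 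Your fallback, however, is sound and is essentially the proof in the paper: take $V=C^\infty(H)$ with the (right or left) regular representation and a non-constant smooth function that is invariant under right translation by $K$, i.e.\ a non-constant function pulled back from $H/K$ (the paper uses left translations, functions on $\overline{f(G)}\backslash H$, and normalizes the function to attain its maximum exactly on the coset $\overline{f(G)}$ so that its stabilizer is exactly $\overline{f(G)}$; for the contradiction a non-constant $K$-invariant function suffices). Such a function always exists once $K$ is a proper closed subgroup, because $H/K$ is then a manifold with more than one point, and it is fixed by $f(G)$ but not by $H$, so Theorem~\ref{t:2.2}(3) applies. The smoothness of the regular representation on $C^\infty(H)$, which you rightly flag as the delicate analytic point, is quoted in the paper from Neeb (\cite{Ne2}, proof of Prop.~4.6).

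A second, smaller gap concerns the parenthetical refinement. You need $C^\infty(H)$ (or $C^\infty(H/K)$) to be a Fr\'echet space, which requires $H$ to be second countable; but the hypothesis is that \emph{$G$} has a countable base. The paper bridges this by first applying the epimorphism hypothesis to the induced map of component groups $f_0\colon G/G_0\to H/H_0$: discrete groups are (zero-dimensional, regular Fr\'echet--) Lie groups, epimorphisms of abstract groups are surjective, hence $H/H_0$ is a quotient of the countable group $G/G_0$ and $H$ is second countable (this also shows $\overline{f(G)}\backslash H$ is connected). Your write-up silently assumes second countability of $H$; you should add this component-group argument to cover the regular Fr\'echet--Lie case.
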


\begin{proof}
The Lie group $H$ acts smoothly by left translations on the space $C^{\infty}(H)$ equipped with the topology of compact convergence with all derivatives (\cite{Ne2}, proof of Prop. 4.6). That is, the action mapping $\lambda\colon H\times C^{\infty}(H)\to C^{\infty}(H)$, $\lambda_g(f)(x)=f(g^{-1}x)$, is smooth. The complete locally convex space $C^{\infty}(H)$ is an abelian Lie group. 

The morphism $f$ induces a homomorphism between discrete groups $f_0\colon G/G_0\to H/H_0$, where $G_0$ and $H_0$ denote the connected components. This $f_0$ is in particular an epimorphism in the category of discrete groups (Lie groups of dimension zero), so is onto by an argument of Kurosh, Livshits and Shul'geifer, see \cite{Nu}, top of p. 156
(or Theorem 4.3.1 in \cite{P2}).
We conclude: the right homogeneous space $\overline{f(G)}\backslash H$ is connected. 
Find a smooth function $h_1\colon \overline{f(G)}\backslash H\to\R$ achieving its maximum 
exactly on the right coset $\overline{f(G)}$. The composition of $h_1$ with the right quotient 
map $H\to \overline{f(G)}\backslash H$ gives an element $h$ of $C^{\infty}(H)$ whose stabilizer 
under the action $\lambda$ is exactly $\overline{f(G)}$. According to Theorem~\ref{t:2.2}
(equivalence of (1) and (3)), $h$ is $H$-invariant, that is, constant. It follows that
$\overline{f(G)}=H$.

If $G$ is has a countable base, that is, has at most countably many connected components, 
the above argument (using only discrete groups, which are Fr\'echet--Lie, indeed zero-dimensional Lie) shows that $H$ has the same property,
therefore $C^{\infty}(H)$ is a Fr\'echet space.
\end{proof}

Note that in the category of connected finite-dimensional Lie groups epimorphisms need not have
a dense range. The existence of epimorphically embedded complex algebraic groups, like
the subgroup of upper triangular matrices in $GL(n,C)$ \cite{Bor}, implies the analogous fact
for finite-dimensional complex Lie algebras, and the case of
finite-dimensional real Lie
algebras follows by complexification. See the next section (Thm. \ref{t:utsl2}) for a stronger result: some of those embeddings are epimorphisms even in the category of Banach--Lie groups.

\section {The case of Banach--Lie groups}
\label{s:BL}

We consider {\em Banach--Lie} groups, or BL-groups, as defined in \cite{BLie} under the name
of (real) Lie groups. We are going to prove the following: if a morphism $f:G\to H$ between BL-groups
is a an epimorphism in \TopGr, then $f$ has a dense range. More generally:

\begin{theorem}
\label{t:BL}
Let $G$ be a BL group, $H$ a proper closed subgroup. Then the inclusion $H\to G$
is not an epimorphism in \TopGr.
\end{theorem}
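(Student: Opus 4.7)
The plan is to apply criterion~(3) of Theorem~\ref{t:2.1}: to produce a Hausdorff topological $G$-group $F$ and an element $x\in F$ fixed by all of $H$ but not by all of $G$. I will take $F$ to be a locally convex vector space carrying a jointly continuous linear $G$-action; then the semidirect product $G\ltimes F$ is a topological $G$-group to which the criterion applies directly.

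First I reduce to the case when $G$ is connected. The identity component $G_0$ is open in $G$ (Banach manifolds are locally path-connected). If $H\cap G_0\subsetneq G_0$, a \TopGr-module witnessing non-epimorphism for the inclusion $H\cap G_0\hookrightarrow G_0$ lifts to one for $H\hookrightarrow G$. If instead $G_0\subseteq H$, the quotient $G/G_0$ is a nontrivial discrete group and $H/G_0$ is a proper subgroup; since epimorphisms in the category of plain groups are surjections (Kurosh--Livshits--Shul'geifer, as invoked in Section~\ref{s:fdLie}), the discrete inclusion is not a \TopGr-epimorphism, and this pulls back. Assuming henceforth that $G$ is connected, I extract Banach--Lie-algebraic data by setting
\[
\mathfrak h := \{X\in\mathfrak g:\exp(tX)\in H\text{ for all }t\in\R\}.
\]
Closedness of $H$ together with the Trotter product and commutator formulas, valid in Banach--Lie groups, shows that $\mathfrak h$ is a closed Lie subalgebra of $\mathfrak g$. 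Since the connected group $G$ is generated by $\exp(\mathfrak g)$, the equality $\mathfrak h=\mathfrak g$ would force $H=G$; hence $\mathfrak h$ is a \emph{proper} closed subspace of $\mathfrak g$, and I fix $X_0\in\mathfrak g\setminus\mathfrak h$ with $g_0:=\exp(t_0 X_0)\notin H$ for some $t_0>0$, together with an open neighborhood $U\ni g_0$ disjoint from the closed set $H$.

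The main step will be to construct the continuous linear $G$-representation on a locally convex space $V$ together with the required fixed vector. The candidate for $V$ is a space of bounded continuous functions on $G$ on which left translation $L_g f(x)=f(g^{-1}x)$ acts jointly continuously (typically a subspace of $\RUCB(G)$, using a right-invariant metric $d_R$ on $G$ induced from the Banach norm on $\mathfrak g$ via $\exp$); the candidate fixed vector is obtained from a $d_R$-Lipschitz bump function $\psi$ supported in $U$ with $\psi(g_0)=1$ and $\psi|_H=0$, which is then rendered left-$H$-invariant by the formula $\phi(g):=\sup_{h\in H}\psi(hg)$. By construction $\phi$ is left-$H$-invariant, bounded, and non-constant ($\phi(g_0)\ge 1>0=\phi(e)$); if one can ensure that $\phi$ lies in a $G$-invariant subspace $V\subseteq C_b(G)$ on which left translation is jointly continuous, Theorem~\ref{t:2.1}(3) yields the conclusion.

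The principal obstacle I anticipate is precisely this joint continuity: for an arbitrary metrizable group, the supremum over $H$ can destroy right-uniform continuity, because conjugation by unbounded elements of $H$ need not be equicontinuous --- this is consistent with the known fact that for $G=\Homeo X$ with $H$ a point stabilizer the inclusion \emph{is} a \TopGr-epimorphism. The Banach--Lie hypothesis is what controls this: near the identity, conjugation is governed by the adjoint representation on the Banach space $\mathfrak g$, and by choosing $\psi$ with support sufficiently small in the exponential chart around $g_0$, one can arrange that the $H$-translates of $\psi$ remain in a locally convex $G$-module constructed from $d_R$ where left translation is jointly continuous. Once $\phi\in V$ is established, invoking Theorem~\ref{t:2.1}(3) completes the proof that $H\to G$ is not a \TopGr-epimorphism.
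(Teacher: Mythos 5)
There is a genuine gap, and it sits exactly where you place your ``principal obstacle.'' Your witness is $\phi(g)=\sup_{h\in H}\psi(hg)$ for a bump function $\psi$, and you need $\phi$ to lie in some locally convex space of functions on which left translation acts jointly continuously. But the supremum is taken over \emph{all} of $H$, which is unbounded, so shrinking the support of $\psi$ in an exponential chart around $g_0$ does not help: if you measure Lipschitz constants with a right-invariant metric, you need $d_R(hg'^{-1}h^{-1},e)$ to be small uniformly in $h\in H$, i.e.\ you need $\Aut$-equicontinuity of conjugation by $H$ (boundedness of $\mathrm{Ad}(H)$), which fails already for $H$ the upper-triangular subgroup of $SL_2(\R)$; if you measure them with a left-invariant metric, $\phi$ is indeed Lipschitz, but then joint continuity of left translation on bounded left-Lipschitz functions requires $\sup_x d_L(g'^{-1}x,x)=\sup_x d_L(x^{-1}g'^{-1}x,e)$ to be small, i.e.\ the group to be SIN, which Banach--Lie groups are not. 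So the construction as proposed does not land in a $G$-module, and no argument is offered that would make it do so. (Your reduction to the connected case also has a soft spot: when $H\cap G_0\subsetneq G_0$, a witness for $H\cap G_0\hookrightarrow G_0$ does not automatically ``lift'' to one for $H\hookrightarrow G$; some induction/extension step is needed. The digression on $\mathfrak h$ is not used for anything.)

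The paper resolves precisely this difficulty by working on the coset space rather than on $G$: it puts on $G/H$ the quotient metric $d(a,b)=\inf\{p(g):ga=b\}$ coming from the exponential length norm $p(g)=\inf\{\norm{X_1}+\dots+\norm{X_n}:g=e^{X_1}\cdots e^{X_n}\}$, and proves via the Hausdorff series that every $g=e^Y$ with $\norm{Y}<\e$ acts on $(G/H,d)$ as a $4$-Lipschitz map. This \emph{local} equicontinuity is all that Proposition~\ref{p:Lipsh} needs to make the $G$-action on the Kantorovich--Rubinstein space $V_0$ of finitely supported measures jointly continuous; the non-constant $G$-equivariant map $G/H\to V$ (equivalently, the $H$-fixed, non-$G$-fixed Dirac measure at $eH$) then contradicts criterion (4) of Theorem~\ref{t:2.1}. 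No supremum over $H$ ever appears, because $H$-invariance is built in by passing to the quotient. If you want to salvage your function-theoretic variant, the analogous move is to take $\psi$ Lipschitz on the quotient metric space $(H\backslash G,d)$ directly and then prove the local Lipschitz estimate for translations on the quotient --- but that estimate, via the BCH computation $\norm{X_i}\le 2\norm{X_i'}$ for $X_i$ with $e^{X_i}=e^Y e^{X_i'}e^{-Y}$, is the actual mathematical content of the theorem and is absent from your proposal.
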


The argument is the same as in Remark 2, Section~\ref{s:2}. We consider a certain metric $d$
on the space $P=G/H$ and its extension $\bar d$ over the space $V_0$ of measures on $P$
with a finite support and total mass zero. We check that the action of $G$ on $V_0$ is jointly 
continuous. If $V=V_0\oplus\R$ is the space of all measures on $P$ with a finite support, 
there is a natural $G$-map from $P$ to $V$, and we invoke Theorem~\ref{t:2.1} (equivalence of (1) and
(4)) to conlude that the inclusion $H\to G$ is not an epimorphism in \TopGr.

\begin{proposition}
\label{p:Lipsh}
Suppose a topological group $G$ continuously acts on a metric space $(M,d)$ by Lipschitz
transformations. Suppose there is a neighborhood $U$ of the unity in $G$ and a constant $C>0$
such that for every $g\in U$ the $g$-shift  $\si_g:M\to M$ is $C$-Lipschitz. 
Let $V$ be the space of measures
on $M$ with a finite support, $V_0$ the hyperplane of measures of total mass zero. Equip $V_0$
with the Kantorovich--Rubinstein metric $\bar d$. 
Then the action of $G$ on $V_0$ is jointly continuous.
\end{proposition}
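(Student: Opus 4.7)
The plan is to reduce joint continuity at an arbitrary point $(g_0,v_0)$ to two separate facts: a uniform Lipschitz bound on $v \mapsto gv$ as $g$ ranges over a neighborhood of $g_0$, and continuity of the orbit map $g \mapsto gv_0$ at the identity. Both ingredients exploit the linearity of the $G$-action on $V_0$ (pushforward of measures) together with the translation-invariance built into $\bar d$, which lets us decouple the ``action-variation'' from the ``measure-variation''.

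The first ingredient is a direct Lipschitz transfer from $(M,d)$ to $(V_0,\bar d)$. If $\sigma_g$ is $L$-Lipschitz on $M$, any representation $v - w = \sum c_i(x_i - y_i)$ yields $g(v-w) = \sum c_i(gx_i - gy_i)$, and the infimum defining $\bar d$ gives $\bar d(gv, gw) \le L\,\bar d(v,w)$. Writing $g = g_0 h$ with $h \in U$ and using $\sigma_g = \sigma_{g_0}\circ\sigma_h$ upgrades the pointwise hypothesis on $U$ to a uniform Lipschitz constant $L_0 := L_{g_0}\cdot C$ valid on the whole neighborhood $g_0 U$ of $g_0$, where $L_{g_0}$ is any Lipschitz constant of the single map $\sigma_{g_0}$.

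The second ingredient is orbit continuity at $e$: for $v_0 = \sum_i \lambda_i \delta_{m_i}$ (a finite sum with $\sum_i \lambda_i = 0$), the representation $h v_0 - v_0 = \sum_i \lambda_i(\delta_{hm_i} - \delta_{m_i})$ immediately yields $\bar d(h v_0, v_0) \le \sum_i |\lambda_i|\,d(h m_i, m_i)$, and this finite sum tends to zero as $h \to e$ by continuity of the original $G$-action on $M$.

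Combining via the triangle inequality,
$$\bar d(gv, g_0 v_0) \le \bar d(gv, gv_0) + \bar d(gv_0, g_0 v_0) \le L_0\,\bar d(v, v_0) + L_{g_0}\,\bar d(h v_0, v_0)$$
for $g = g_0 h \in g_0 U$, and both terms can be made arbitrarily small. The step I expect to be the conceptual crux is the first: the uniform bound $L_0$ on $g_0 U$ is exactly where the hypothesized neighborhood $U$ of uniformly $C$-Lipschitz elements enters. Without it one would only know that each individual $\sigma_g$ is Lipschitz with an unknown, $g$-dependent constant, and the variation of $v$ near $v_0$ could not be absorbed uniformly in $g$, so joint continuity would collapse to mere separate continuity.
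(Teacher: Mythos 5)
Your proof is correct and follows essentially the same route as the paper: establish the Lipschitz transfer from $(M,d)$ to $(V_0,\bar d)$, verify continuity of the orbit map at the identity via the explicit representation of $hv_0-v_0$, and combine the two by a triangle-inequality decomposition at $(g_0,v_0)$ (the paper writes nearby elements as $hg_0$ rather than $g_0h$, but the estimates are the same). No issues.
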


This is essentially a version of Megrelishvili's result \cite[Theorem 4.4]{Me}.
If $G$ is generated by $U$ (this happens, for example, if $G$ is connected), we can drop
the assumption that $G$ acts by Lipschitz transformations, as this will follow from the condition that
$\si_g$ is Lipschitz for every $g\in U$.

\begin{proof}
If $\bar d(v,0)<\e$, write $v=\sum c_i(x_i-y_i)$ so that $\sum |c_i| d(x_i,y_i)<\e$.
If $g\in U$, we have $\bar d(gv,0)\le\sum |c_i| d(gx_i,gy_i)<C\e$. It follows that the action
$G\ti V_0\to V_0$ is jointly continuous at $(1_G, 0)$. Similarly, the Lipschitz condition 
for $\si_g$ implies that the action is separately continuous. We conclude that the action
is jointly continuous at every point $(g_0, v_0)$: if $h$ is close to $1_G$ and $v\in V_0$ is small, 
we have $hg_0(v_0+v)-g_0v_0=(hg_0v_0-g_0v_0)+hg_0v$, where both summands are small.
\end{proof}

A {\em norm} on a group $G$ is a function $p:G\to \R_+$ such that: 
(1) $p(1_G)=0$ and $p(x)>0$ if $x\ne 1_G$;
(2) $p(x)=p(x\obr)$; 
(3) $p(xy)\le p(x)+p(y)$. If $G$ is a metrizable group, its topology is
generated by a certain norm $p$, in the sense that the sets of the form $\{x\in G:p(x)<\e\}$
constitute a base at $1_G$. (Note that not every norm generates a group topology.) If $H$ is a closed
subgroup of $G$, we can define a compatible metric $d$ on $G/H$ by 
$$
d(a,b)=\inf\{p(g): ga=b\}.
$$

We now prove Theorem~\ref{t:BL}. 
First we consider the case when $G$ is a connected BL-group. 
Denote by $\frg$ its Lie algebra. We assume that a norm on $\frg$
is given such that $\frg$ is a Banach space and $\norm{[X,Y]}\le \norm{X}\cdot\norm{Y}$.
The {\em exponential length} norm on $G$ is defined by
$$
p(g)=\inf\{\norm{X_1}+\dots\norm{X_n}: g=e^{X_1}\dots e^{X_n}\}.
$$
This norm is compatible with the topology of $G$ \cite[Proposition 3.2]{ADM}.
Note that we may assume that all vectors $X_i$ in the definition above are short, that is, 
belong to a given neighborhood of zero in $\frg$: otherwise replace a vector $X$ by $k$ vectors
$X/k$. This will not change the sum of norms or the product $e^{X_1}\dots e^{X_n}$.

Equip $G/H$ with the corresponding metric $d$ defined as above:
\[
d(a,b)=\inf\{\norm{X_1}+\dots\norm{X_n}: e^{X_1}\dots e^{X_n}a=b\}.
\]
Pick a small $\e$, and let $U=\{e^X:\norm{X}<\e\}\sbs G$ be the corresponding neighborhood
of $1_G$. 
If $\norm{Y}<\e$, $g=e^Y\in U$ and $a,b\in P$, then 
\begin{align}
\label{f1}
d(ga,gb)&=\inf\{\norm{X_1}+\dots\norm{X_n}: e^{X_1}\dots e^{X_n}ga=gb\}\\
&=
\inf\{\norm{X_1}+\dots\norm{X_n}: e^{-Y}e^{X_1}\dots e^{X_n}e^Ya=b\}.
\nonumber
\end{align}
Pick short vectors $X_1',\dots,X_n'$ so that 
\[
e^{X'_1}\dots e^{X'_n}a=b
\]
and
\begin{equation}
\norm{X'_1}+\dots+\norm{X'_n}< 2d(a,b).
\label{eq:2dab}
\end{equation}
Pick short vectors $X_i$ so that $e^{X_i}=e^{Y}e^{X_i'}e^{-Y}$.
Then $e^{-Y}e^{X_1}\dots e^{X_n}e^Ya=b$, and hence, according to the formula (\ref{f1}),
\begin{equation}
d(ga,gb)\le \norm{X_1}+\dots\norm{X_n}.
\label{eq:dgagb}
\end{equation}
We have $X_i=H(Y, H(X'_i,-Y))=X_i'+[Y, X_i']+$ terms of higher degree, where $H$
is the Hausdorff series \cite[Ch.2, \S 6,7]{BLie}. If $\e$ is small enough, we have
$\norm{X_i}\le 2\norm{X_i'}$, therefore, from Eq. (\ref{eq:2dab}) and (\ref{eq:dgagb}), 
$d(ga,gb)<4d(a,b)$. 

By Proposition~\ref{p:Lipsh},
the action of $G$ on $V$ is jointly continuous. This proves Theorem~\ref{t:BL}
in the case $G$ is connected.

If we drop the assumption that $G$ is connected, the argument
with connected components used in the previous section shows that we can assume
that $H$ meets all connected components of $G$. In that case $G/H=G_0/(G_0\cap H)$,
and we have seen above that there is metric on the manifold $G/H$ such that $G_0$ acts
on this manifold by Lipschitz transformations. Actually, the whole group $G$ acts by Lipschitz
transformations. This can be deduced from the following observation: every automorphism
of the connected Lie group $G_0$ is Lipschitz, if $G$ is equipped with the exponential length
metric as above. To see that the latter statement is true, note that the tangent automorphism
$\si$ of the Lie algebra $\frg$ is such that 
$C\obr\norm{X}\le\norm{\si X}\le C\norm{X}$ for some constant $C>0$.

An early version of the present paper contained the following questions: do epimorphisms $f\colon G\to H$
in the category of BL-groups have a dense range, in particular if $G$ and $H$ are (connected) finite-dimensional Lie groups? The anonymous referee provided the following
counterexample to all three questions.

\begin{theorem}
Let $B$ be the subgroup of
upper-triangular matrices in $SL_2(\R)$. Then the embedding $B\to SL_2(\R)$ is an epimorphism
in the category of BL-groups. 
\label{t:utsl2}
\end{theorem}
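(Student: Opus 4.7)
The plan is to verify the centralizer criterion of Theorem~\ref{t:2.2}(2) in the category of Banach--Lie groups (the proof of Theorem~\ref{t:2.2} transfers verbatim to BL-groups, since semidirect products of BL-groups are BL-groups). Fix a BL-morphism $f\colon SL_2(\R)\to K$ to a BL-group $K$ and an element $k\in K$ commuting with $f(B)$; equivalently, the inner automorphism $c_k$ of $K$ fixes $f(B)$ pointwise. The task is to show that $c_k$ fixes the whole image $f(SL_2(\R))$ pointwise.

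Pass to Lie algebras. The derivative $\mathrm{Ad}(k)=dc_k$ is a continuous Lie algebra automorphism of $\frh:=\mathrm{Lie}(K)$ fixing $df(\mathfrak b)$ pointwise. Choose the standard $\frsl_2$-triple $H,E,F\in\frsl_2(\R)$ with $[H,E]=2E$, $[H,F]=-2F$, $[E,F]=H$ and $\mathfrak b=\R H\oplus\R E$, and set
\[
v\;:=\;\mathrm{Ad}(k)\bigl(df(F)\bigr)-df(F)\;\in\;\frh .
\]
Applying the Lie algebra automorphism $\mathrm{Ad}(k)$ to the relations $[H,F]=-2F$ and $[E,F]=H$ and subtracting the original identities yields $[df(H),v]=-2v$ and $[df(E),v]=0$. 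Thus $v$ is a weight-$(-2)$ highest-weight vector for the representation $X\mapsto\mathrm{ad}(df(X))$ of $\frsl_2(\R)$ on $\frh$.

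The decisive step now uses the Banach hypothesis. Because the bracket of the Banach--Lie algebra $\frh$ is continuous bilinear, the operator $T:=\mathrm{ad}(df(H))$ is bounded, so $|\lambda|\le\|T\|$ for every eigenvalue $\lambda$ of $T$. A short induction based on $[H,F]=-2F$ gives
\[
T\bigl(\mathrm{ad}(df(F))^n v\bigr)=-2(n+1)\,\mathrm{ad}(df(F))^n v\quad (n\ge 0),
\]
so if no iterate $\mathrm{ad}(df(F))^n v$ vanished, $T$ would possess the unbounded eigenvalue sequence $-2,-4,-6,\dots$, a contradiction. Let $n_0\ge 0$ be minimal with $\mathrm{ad}(df(F))^{n_0}v=0$. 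If $n_0\ge 1$, apply $\mathrm{ad}(df(E))$ to this identity: by the standard $\frsl_2$-identity $EF^n v=-n(n+1)F^{n-1}v$, valid in any $\frsl_2$-representation at any weight-$(-2)$ highest-weight vector, one obtains $0=\mathrm{ad}(df(E))\mathrm{ad}(df(F))^{n_0}v=-n_0(n_0+1)\mathrm{ad}(df(F))^{n_0-1}v$, forcing $\mathrm{ad}(df(F))^{n_0-1}v=0$ and contradicting minimality. Hence $n_0=0$ and $v=0$.

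Consequently $\mathrm{Ad}(k)$ fixes the subspace $df(\frsl_2(\R))$ pointwise, so for every $X\in\frsl_2(\R)$
\[
kf(\exp X)k\obr=\exp\bigl(\mathrm{Ad}(k)\,df(X)\bigr)=\exp(df(X))=f(\exp X).
\]
Since $SL_2(\R)$ is connected and therefore generated by exponentials of its Lie algebra, $k$ commutes with all of $f(SL_2(\R))$. This verifies criterion~(2) of Theorem~\ref{t:2.2} and completes the proof. The main obstacle is the spectral step of paragraph three: it relies essentially on the operator norm on $\frh$ bounding the eigenvalues of $\mathrm{ad}(df(H))$, and this is the precise point where the Banach assumption enters and cannot be relaxed to merely ``locally convex''.
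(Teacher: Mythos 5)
Your proof is correct and follows essentially the same route as the paper's: both reduce to showing that a vector $v$ with $[df(e),v]=0$ and $[df(h),v]=-2v$ must vanish, using boundedness of $\mathrm{ad}(df(h))$ to force the string $\mathrm{ad}(df(F))^n v$ to terminate and then $\frsl_2$ representation theory to get a contradiction. The only (minor) differences are that you make explicit the reduction from groups to Lie algebras via the centralizer criterion, and you finish with the identity $EF^nv=-n(n+1)F^{n-1}v$ and backward induction where the paper instead invokes the symmetry of the spectrum of $h$ in a finite-dimensional $\frsl_2$-representation.
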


\begin{proof}
It suffices to prove the analogous fact for Lie algebras which reduces
to the following. Let $\frg$ be a Banach--Lie algebra. Consider $\frsl_2$-triples $(h,e,f_1)$ and
$(h,e,f_2)$ in $\frg$, that is, 
$$
[h,e]=2e,\quad [h,f_j]=-2f_j,\quad [e,f_j]=h.
$$
We must prove that $f_1=f_2$. Assume the contrary: $f=f_1-f_2\ne0$. Put 
$\frs=\mbox{span\,}\{e,h, f_1\}$, $v_n=(\mbox{ad\,}f_1)^nf$,
and $V=\mbox{span\,}\{v_0, v_1, \dots\}$.
Since $[e,f]=0$ and $[h,f]=-2f$, it is easy to see that $V$ is an $\frs$-module
such that each $v_n$ is an $\mbox{ad\,}h$-eigenvector with the eigenvalue $-2-2n$.
As $\mbox{ad\,}h$ is a bounded
operator, the vectors $v_n$ are zero for $n$ large enough. We therefore obtain a 
finite-dimensional representation of $\frsl_2$ in which $h$ has negative spectrum. 
That is impossible:
the spectrum of $h$ in a finite-dimensional representation of $\frsl_2$ is 
symmetric with respect to zero \cite[Ch.8, \S 1]{B8}.
\end{proof}

\section{Open questions}

Let $f\colon G\to H$ be two groups from a category marking a row, and suppose $f$ is an epimorphism in a category marking a column. Does $f$ necessarily have a dense range?

Here is a summary of what we know. 

\vskip .4cm

\begin{center}
\begin{tabular}{|l||c|c|c|c|c|c|}
\hline
& connected & \multirow{2}{*}{f.-d. Lie} & \multirow{2}{*}{B-Lie} & regular & \multirow{2}{*}{LC Lie} & \multirow{2}{*}{top.}  \\
& f.d. Lie & & &  F-Lie & & \\
\hline \hline
connected f.-d. Lie & $\times$ & $\times$ &$\times$ & $\checkmark$ & $\checkmark$  & $\checkmark$  \\
\hline
f.-d. Lie & & $\times$ & $\times$ & ? & $\checkmark$  & $\checkmark$  \\
\cline{1-1}
\cline{3-7}
Banach-Lie & \multicolumn{2}{c|}{} & $\times$ & ? & ? & \checkmark \\
\cline{1-1}
\cline{4-7}
Fr\'echet-Lie & \multicolumn{3}{c|}{} & $\times$ & $\times$ & ? \\
\cline{1-1}
\cline{5-7}
loc. conv. Lie & \multicolumn{4}{c|}{} & $\times$  & ? \\
\cline{1-1}
\cline{6-7}
top. groups & \multicolumn{5}{c|}{} & $\times$\\ 
\cline{1-1}
\cline{7-7}
\end{tabular}
\end{center}
\vskip .4cm

As we mentioned in Section~\ref{s:BL}, 
the three crosses appearing in the BL-column 
are due to the anonymous referee. We cordially thank the referee for this valuable contribution. 

We have seen that whether or not the inclusion $i:H\to G$ of a proper
closed subgroup is an epimorphism in a suitable category depends on the dynamical system $(G, G/H)$. For example, if it is smooth (or, more generally, Lipschitz, see Proposition \ref{p:Lipsh}), the inclusion $i$ is not an epimorphism of Hausdorff groups. In order for $i$ to be an epimorphism in the category \TopGr, the action of $G$ on $G/H$ must be ``sufficiently mixing''. Formalizing this criterion could be an interesting task.




\begin{thebibliography}{99}
 

\bibitem{ADM} H. Ando, M. Doucha, Y. Matsuzawa, \newblock {\em Large scale geometry of Banach--Lie groups,} \newblock  arXiv:2011.10376v2 [math.OA], version of Dec. 11, 2020.

\bibitem{AT} A. Arhangel'skii and M. Tkachenko, \newblock {\em 
Topological groups and related structures,}
\newblock Atlantis Studies in Mathematics, \textbf{1}, Atlantis Press, Paris; World Scientific Publishing, Hackensack, NJ, 2008.

\bibitem{Bor} F. Bien, A. Borel, \newblock {\em Sous-groupes \'epimorphiques de groupes alg\'ebriques
lin\'eaires I,II,} \newblock C. R. Acad. Sci. Paris S\'er. I, {\bf 315} (1992), 649--653, 1341--1346.

\bibitem{Brion} M. Brion, \newblock {\em Epimorphic subgroups of algebraic groups,}
\newblock Math. Res. Lett. \textbf{24} (2017), no. 6, 1649--1665.

\bibitem{BourbEVT} N.~Bourbaki, \newblock {\em Espaces vectoriels topologiques,} \newblock Springer, 2007.

\bibitem{BLie} N.~Bourbaki, \newblock {\em Lie groups and Lie algebras. Chapters 1--3}, \newblock Springer, 1989.

\bibitem{B8} N.~Bourbaki, \newblock {\em Lie groups and Lie algebras. Chapters 7-9}, \newblock Springer, 2005.

\bibitem{Dedi} H. Gl\"ockner, \newblock {\em Solutions to open problems in Neeb's recent survey on infinite-
dimensional Lie groups,} \newblock Geom. Dedicata {\bf 135} (2008), 71-86.

\bibitem{E} R. Engelking, \newblock {\em General topology,} Second edition. \newblock Sigma Series in Pure Mathematics, \textbf{6}, Heldermann Verlag, Berlin, 1989.

\bibitem{H} R.S. Hamilton,
\newblock {\em The inverse function theorem of Nash and Moser,}
\newblock Bull. Amer. Math. Soc {\bf 7} (1982), No. 1, 65--222.

\bibitem{KN} S.~Kobayashi, K.~Nomizu, \newblock {\em Foundations of Differential Geometry. Vol.~1}, 
\newblock Interscience Publishers, 1963.

\bibitem{Me} M.G. Megrelishvili, \newblock {\em Free topological G-groups,}
\newblock New Zealand J. Math. \textbf{25} (1996), 59--72.

\bibitem{M} J. Milnor, 
\newblock {\em Remarks on infinite-dimensional Lie groups,} \newblock in book: 
Relativit\'e, groupes et topologie II. North-Holland, 1984. Pp. 1007--1057.

\bibitem{Ne} K.-H. Neeb, \newblock {\em Towards a Lie theory of locally convex groups,}
\newblock Jpn. J. Math. \textbf{1} (2006), 291--468.

\bibitem{Ne2} K.-H. Neeb, \newblock {\em On differentiable vectors for representations of infinite dimensional Lie groups,} \newblock J. Funct. Anal. \textbf{259} (2010), 2814--2855. 

\bibitem{Nu} E.C. Nummela, \newblock {\em On epimorphisms of topological groups,}
\newblock General Topology Appl. \textbf{9} (1978), 155--167.

\bibitem{O} H. Omori, \newblock {\em Infinite-Dimensional Lie Groups,} \newblock Translations of Mathematical Monographs, \textbf{158}, American Mathematical Society,
1997.

\bibitem{Pe} V. Pestov, \newblock {\em Epimorphisms of Hausdorff groups by way of topological dynamics,} \newblock New Zealand J. Math. \textbf{26} (1997), 257--262.

\bibitem{P2} V. Pestov, \newblock {\em Topological groups: Where to from here?}, 
\newblock Topology Proceedings \textbf{24} (1999), 421--502; arXiv:math/9910144.

\bibitem{U1} V. Uspenskij, 
\newblock {\em The solution of the epimorphism problem for
Hausdorff topological groups,} \newblock Seminar Sophus Lie {\bf 3} (1993), 69--70.

\bibitem{U2} V. Uspenskij, 
\newblock {\em The epimorphism problem for Hausdorff topological groups,}
\newblock Topology Appl. {\bf 57} (1994), 287--294.

\bibitem{U3} V. Uspenskij, 
\newblock {\em Epimorphisms of topological groups and $Z$-sets
in the Hilbert cube,} 
\newblock Symposia Gaussiana. Proceedings of the 2nd Gauss Symposium
(Munich,  
1993). Conference A: Mathematics
and Theoretical Physics (edited by M. Behara, R. Fritsch, R.G. Lintz).
Walter de Gruyter \& Co., Berlin -- New York, 1995. Pp. 733-738.

\bibitem{villani}  C. Villani, \newblock {\em Optimal transport. Old and new,} \newblock Grundlehren der Mathematischen Wissenschaften \textbf{338}, Springer-Verlag, Berlin, 2009. 


\end{thebibliography}
\end{document}